\newcommand{\e}[1]{\begin{equation}#1\end{equation}}
\newcommand{\al}[1]{\begin{align}#1\end{align}}
\newcommand{\ald}[1]{\begin{aligned}#1\end{aligned}}
\newcommand{\ite}[1]{\begin{itemize}#1\end{itemize}}
\newcommand{\prn}[1]{\left(#1\right)}
\newcommand{\cur}[1]{\left\{#1\right\}}
\newcommand{\domain}{\mathop{\mathrm{dom}}\nolimits}
\newcommand{\image}{\mathop{\mathrm{Im}}\nolimits}
\newcommand{\kernel }{\mathop{\mathrm{Ker}}\nolimits}
\def\coloneq{\mathrel{\mathop:}=}
\def\r{\mathbb{R}}
\def\rn{\mathbb{R}^n}
\def\rm{\mathbb{R}^m}
\def\rd{\mathbb{R}^d}
\def\sn{\mathbb{S}^n}
\def\sd{\mathbb{S}^d}
\newtheorem{thm}{Theorem}%
\newtheorem{prp}[thm]{Proposition}%
\newtheorem{crl}[thm]{Corollary}%
\newtheorem{rmk}{Remark}%
\newtheorem{dfn}{Definition}%
\newtheorem{asm}{Assumption}
\begin{document}

\title[Article Title]{Variational analysis of unbounded and discontinuous generalized eigenvalue functions with application to topology optimization}


\author*[1]{\fnm{Akatsuki} \sur{Nishioka}}\email{akatsuki\_nishioka@mist.i.u-tokyo.ac.jp}

\author[1,2]{\fnm{Yoshihiro} \sur{Kanno}}

\affil*[1]{\orgdiv{Department of Mathematical Informatics}, \orgname{The University of Tokyo}, \orgaddress{\street{Hongo 7‑3‑1}, \city{Bunkyo‑ku}, \postcode{113-8656}, \state{Tokyo}, \country{Japan}}}

\affil[2]{\orgdiv{Mathematics and Informatics Center}, \orgname{The University of Tokyo}, \orgaddress{\street{Hongo 7‑3‑1}, \city{Bunkyo‑ku}, \postcode{113-8656}, \state{Tokyo}, \country{Japan}}}

\abstract{
The maximum (or minimum) generalized eigenvalue of symmetric positive semidefinite matrices that depend on optimization variables often appears as objective or constraint functions in structural topology optimization when we consider robustness, vibration, and buckling. It can be an unbounded or discontinuous function where matrices become singular (where a topological change of the structural design occurs). Based on variational analysis, we redefine the maximum (and minimum) generalized eigenvalue function as an extended real-valued function and propose a real-valued continuous approximation of it. Then, we show that the proposed approximation epi-converges to the original redefined function, which justifies solving problems with the approximation instead. We consider two specific topology optimization problems: robust compliance optimization and eigenfrequency optimization and conduct simple numerical experiments.
}

\keywords{Eigenvalue optimization, Generalized eigenvalue optimization, Topology optimization, Variational analysis, Epi-convergence}

\pacs[MSC Classification]{49J53, 74P05, 90C26, 90C90}

\maketitle


\section{Introduction}

Topology optimization is a method to find an optimal design of a structure, which allows topological changes of a structure from the initial design \cite{allaire02,bendsoe04}. Although topology optimization problems of a continuum structure are essentially infinite-dimensional, this paper only deals with discretized problems or truss topology optimization problems, which are inherently finite-dimensional.

When we consider robustness, vibration, and buckling in topology optimization, generalized eigenvalues of symmetric matrices naturally appear as objective or constraint functions \cite{achtziger07siam,ferrari19,kocvara02,ohsaki99,torii17}. Here, the optimization (or design) variable is a vector that consists of cross-sectional areas of bars, densities of each finite element, etc, and we consider generalized eigenvalues of matrix-valued functions of the optimization variable. These generalized eigenvalue functions can be unbounded (extended real-valued) or discontinuous where matrices become singular (where some of the design variables become 0 and a topological change of the design occurs) \cite{achtziger07siam}. One conventional approach to avoid such inconvenience is to set very small positive lower bounds on the design variables (or similarly, fill holes with a very weak material called an ersatz material) \cite{allaire02,bendsoe04,nishioka23coap}. However, when the objective function is discontinuous, this approach is not necessarily an appropriate way to approximate the original problem, because solutions of approximated problems do not necessarily converge to a solution to the original problem.

In this paper, using variational analysis \cite{rockafellar98}, which is useful when we treat extended real-valued or discontinuous functions, we propose a new approximation to the unbounded and discontinuous generalized eigenvalue function. To this end, we first redefine the maximum (and minimum) generalized eigenvalue as an extended real-valued function. Then, we show that the proposed approximation epi-converges to the redefined function, which ensures the convergence of solutions of the approximated optimization problems to solutions of the original optimization problems in some sense. We first consider a general setting and then consider some specific generalized eigenvalues in topology optimization: robust (or worst-case) compliance and the minimum (or fundamental) eigenfrequency. Note that generalized eigenvalue optimization also has application to control theory \cite{boyd93,nesterov95}.

Contributions are summarized as follows:
\ite{
\item Introduce an extended definition of the maximum (and minimum) generalized eigenvalue. The differences from the definition introduced by \cite{achtziger07siam} are as follows.
\ite{
\item We both consider the maximum and the minimum generalized eigenvalue and their relationships.
\item We consider the case $\lambda_\mathrm{max}(X,Y)$ with matrix variables $X,Y\in\sn_{\succeq0}$. The results with matrix-valued functions $\lambda_\mathrm{max}(A(x),B(x))$ can be obtained as direct consequences.
\item We consider the case when both matrices in the arguments are zero matrices.
\item We redefine the maximum (and minimum) generalized eigenvalue as an extended real-valued function, which can take $+\infty$ as a function value.
}
\item Propose a new approximation of the maximum generalized eigenvalue and prove epi-convergence of it to the proposed extended real-valued function. The advantages of the proposed approximation are as follows:
\ite{
\item It is real-valued and continuous, and thus it is easier to deal with in optimization problems. 
\item It epi-converges to the original redefined function, and thus there exists a theoretical guarantee of the convergence of solutions of the approximated optimization problem to solutions of the original optimization problem in some sense, unlike the conventional approach with the artificial lower bound.
\item It does not require artificial lower bounds on variables, and thus the proposed approximated problems for structural optimization are actual topology optimization problems. We actually obtain solutions with many variables equal to zero in numerical experiments.
}
}

We also conduct numerical examples on simple truss topology optimization problems. Note that most of the theoretical results also hold for continuum topology optimization.

\section{Notation and preliminaries}

\subsection{Notation}

We use the following notation: $\mathbb{R}^m_{\ge0}$ and $\mathbb{R}^m_{>0}$ are the set of $m$-dimensional real vectors with nonnegative and positive components, respectively. We denote the zero matrix by $0$. $A\succeq 0$ and $A\succ 0$ respectively denote that $A$ is symmetric positive semidefinite and symmetric positive definite. $\mathbb{S}^n$, $\mathbb{S}^n_{\succeq 0}$, and $\mathbb{S}^n_{\succ 0}$ are the sets of $n$-dimensional symmetric matrices, symmetric positive semidefinite matrices, and symmetric positive definite matrices, respectively. For an $n$-dimensional square matrix $A$, we define $\kernel A\coloneq\{v\in\rn\mid Av=0\}$ and $\image A\coloneq\{Av\mid v\in\rn\}$. $\langle a,b \rangle$ and $\|a\|$ denote the standard inner product and the Euclidean norm for $a,b\in\rm$. $\langle A,B \rangle\coloneq\mathrm{tr}(AB)$ denotes an inner product of matrices $A,B\in\sn$. $\lambda_\mathrm{max}(X,Y)$ and $\lambda_\mathrm{min}(X,Y)$ denote the maximum and the minimum generalized eigenvalues of a pair of matrices $(X,Y)$. Eigenvalue with one argument $\lambda_\mathrm{max}(X)$ denotes the maximum standard eigenvalue. $\domain f$ denotes the effective domain of $f:\rm\to(-\infty,\infty]$, i.e.~$\domain f\coloneq\{x\in\rm\mid f(x)<\infty\}$. 

\subsection{Generalized eigenvalues}

In this paper, we only consider generalized eigenvalues of symmetric matrices, and thus they are real numbers. See \cite{harville97,boyd93} for details.

\subsubsection{Positive definite matrices}

\begin{dfn}[Generalized eigenvalues of symmetric positive definite matrices]
We define generalized eigenvalues and generalized eigenvectors of $(X,Y)\in\sn_{\succ0}\times\sn_{\succ0}$ by $\lambda_i\in\r$ and nonzero vectors $v_i\in\rn$ ($i=1,\ldots,n$) satisfying
\e{
Xv_i=\lambda_i Yv_i.
\label{ge}
}
\end{dfn}
 
Generalized eigenvalue problem \eqref{ge} is equivalent to the standard eigenvalue problem with the matrix $Y^{-1/2}XY^{-1/2}$, where $Y^{-1/2}$ is the positive square root of a symmetric positive definite matrix $Y$ \cite{bhatia13}. This fact ensures that there exist $n$ generalized eigenvalues. Also, the continuity of $\lambda_\mathrm{max}(\cdot,\cdot)$ on $\sn_{\succ0}\times\sn_{\succ0}$ follows by the fact that $\lambda_\mathrm{max}(X,Y)=\lambda_\mathrm{max}(Y^{-1/2}XY^{-1/2})$ (see \cite{nishioka23coap} for details). For any $X,Y\in\sn_{\succ0}$, the maximum generalized eigenvalue can be characterized by using the generalized Rayleigh quotient:
\e{
\lambda_\mathrm{max}(X,Y)=\underset{v\neq0}{\sup}\frac{v^\top Xv}{v^\top Yv}.
\label{rayleigh}
}
Since $\lambda_\mathrm{max}(X,Y)\le\alpha$ is equivalent to $\alpha Y-X\succeq0$ for any $\alpha\in\r$, sublevel sets of $\lambda_\mathrm{max}(X,Y)$ are convex, hence $\lambda_\mathrm{max}(\cdot,\cdot)$ is quasiconvex on $\sn_{\succ0}\times\sn_{\succ0}$ \cite{achtziger07siam}.
\begin{dfn}[Quasiconvex function\footnote{Not to be confused with a different notion of a quasiconvex function (in the sense of Morrey \cite{morrey52}) often used in the calculus of variations.}]
A function $f:D\to(-\infty,\infty]$ with $D\subset\rm$ nonempty convex is said to be quasiconvex if its sublevel set $\{x\in D \mid f(x)\le\alpha\}$ is convex for any $\alpha\in\mathbb{R}$.
\end{dfn}

A further study shows that $\lambda_\mathrm{max}(\cdot,\cdot)$ is actually pseudoconvex on $\sn_{\succ0}\times\sn_{\succ0}$ \cite{nishioka23coap}.
\begin{dfn}[Pseudoconvex function]
A locally Lipschitz continuous function $f:D\to\mathbb{R}$ with $D\subset\rm$ nonempty convex is said to be pseudoconvex if the implication $f(x)>f(y)\ \Rightarrow\ \langle g,y-x \rangle < 0$ holds for any $x,y\in D$ and a Clarke subgradient $g\in\partial f(x)$.
\end{dfn}
Note that the implications ``$\text{convex}\implies\text{pseudoconvex}\implies\text{quasiconvex}$'' hold (see \cite{soleimani07} for details). One advantage of a pseudoconvex function is that it has no (non-global) local optimal solutions, and thus it is possible to compute a global optimal solution in many cases \cite{kiwiel01,nishioka23coap}. We summarize the above results as follows. 
\begin{prp}
The maximum generalized eigenvalue function $\lambda_\mathrm{max}(\cdot,\cdot):\sn_{\succ0}\times\sn_{\succ0}\to\r$ is continuous and pseudoconvex.
\end{prp}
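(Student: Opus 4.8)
The plan is to treat the two assertions separately, in each case reducing to the corresponding fact about \emph{standard} eigenvalues through the identity $\lambda_\mathrm{max}(X,Y)=\lambda_\mathrm{max}(Y^{-1/2}XY^{-1/2})$ recorded above, and writing $\Phi(X,Y)\coloneq Y^{-1/2}XY^{-1/2}$ and $f\coloneq\lambda_\mathrm{max}(\cdot,\cdot)=\lambda_\mathrm{max}\circ\Phi$ throughout.

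For continuity I would first observe that $Y\mapsto Y^{1/2}$ is smooth (indeed real-analytic) on $\sn_{\succ0}$, by the implicit function theorem applied to $Z^2=Y$, and that matrix inversion is smooth on invertible matrices; hence $\Phi$ is a smooth map from the open convex set $\sn_{\succ0}\times\sn_{\succ0}$ into $\sn$. Composing with the standard largest-eigenvalue function $\lambda_\mathrm{max}:\sn\to\r$, which is $1$-Lipschitz in the Frobenius norm because it is the supremum of the linear functionals $M\mapsto\ang{uu^\top,M}$ over unit vectors $u$, gives that $f$ is continuous; since $\Phi$ is $C^{1}$ and $\lambda_\mathrm{max}$ is globally Lipschitz, $f$ is in fact locally Lipschitz, which is needed even to apply the definition of pseudoconvexity.

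For pseudoconvexity I would first obtain an outer estimate of the Clarke subdifferential of $f=\lambda_\mathrm{max}\circ\Phi$ via the chain rule: since $\Phi$ is $C^{1}$ and $\lambda_\mathrm{max}$ is locally Lipschitz, $\partial f(X,Y)\subseteq D\Phi(X,Y)^{*}\,\partial\lambda_\mathrm{max}(\Phi(X,Y))$, where $\partial\lambda_\mathrm{max}(M)=\mathrm{conv}\{uu^\top\mid\nrm{u}=1,\ Mu=\lambda_\mathrm{max}(M)u\}$ is the classical subdifferential of the largest-eigenvalue function. Pulling this back through $D\Phi$ — equivalently, using the known sensitivity formulas $\partial\lambda/\partial X=vv^\top/(v^\top Yv)$ and $\partial\lambda/\partial Y=-\lambda\,vv^\top/(v^\top Yv)$ for a generalized eigenvector $v$ of the top eigenvalue $\lambda$ — one gets that every Clarke subgradient $G=(G_X,G_Y)\in\partial f(X,Y)$ has the form $G_X=P$, $G_Y=-\lambda P$, where $\lambda=\lambda_\mathrm{max}(X,Y)$ and $P=\sum_k t_k\,v_kv_k^\top/(v_k^\top Yv_k)$ is a convex combination of terms built from generalized eigenvectors $v_k$ of $(X,Y)$ for $\lambda$. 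From this I would extract two facts: $P\succeq0$ with $P\neq0$ (e.g. $\ang{P,Y}=\sum_k t_k=1$), and $\ang{P,X-\lambda Y}=\sum_k t_k\prn{v_k^\top Xv_k-\lambda\,v_k^\top Yv_k}/(v_k^\top Yv_k)=0$ because $Xv_k=\lambda Yv_k$.

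The pseudoconvexity implication is then almost immediate. Suppose $f(X_1,Y_1)>f(X_2,Y_2)$ and put $\lambda_1\coloneq f(X_1,Y_1)$. Since $\lambda_1>\lambda_\mathrm{max}(X_2,Y_2)=\lambda_\mathrm{max}(Y_2^{-1/2}X_2Y_2^{-1/2})$, we have $\lambda_1 I-Y_2^{-1/2}X_2Y_2^{-1/2}\succ0$, and conjugating by $Y_2^{1/2}$ gives the strict form of the equivalence ``$\lambda_\mathrm{max}(X,Y)\le\alpha\iff\alpha Y-X\succeq0$'', namely $\lambda_1Y_2-X_2\succ0$. For any $G=(G_X,G_Y)\in\partial f(X_1,Y_1)$, written as $G_X=P$, $G_Y=-\lambda_1 P$ as above, I would compute
\e{
\ang{G_X,X_2-X_1}+\ang{G_Y,Y_2-Y_1}=\ang{P,X_2-\lambda_1Y_2}-\ang{P,X_1-\lambda_1Y_1}=-\ang{P,\lambda_1Y_2-X_2},
}
using $\ang{P,X_1-\lambda_1Y_1}=0$. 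Since $P$ is nonzero and positive semidefinite while $\lambda_1Y_2-X_2$ is positive definite, $\ang{P,\lambda_1Y_2-X_2}>0$, so the left-hand side is negative — exactly the required implication. I expect the only delicate points to be the nonsmooth chain rule at matrices with a repeated largest generalized eigenvalue (for which only the inclusion direction of Clarke's chain rule is needed, together with the subdifferential formula for $\lambda_\mathrm{max}$) and the passage from ``$\succeq$'' to the strict ``$\succ$'', which is precisely what makes the final inequality strict; the rest is bookkeeping.
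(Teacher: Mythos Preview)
Your argument is correct. The paper itself does not give a proof of this proposition: it is stated as a summary of facts established in the preceding discussion, where continuity is attributed to the identity $\lambda_\mathrm{max}(X,Y)=\lambda_\mathrm{max}(Y^{-1/2}XY^{-1/2})$ and pseudoconvexity is simply cited from \cite{nishioka23coap}. So there is no ``paper's own proof'' to compare against beyond that pointer.

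Your route---reducing continuity and local Lipschitzness to the smoothness of $\Phi(X,Y)=Y^{-1/2}XY^{-1/2}$ and the Lipschitzness of the standard $\lambda_\mathrm{max}$, then obtaining the Clarke subdifferential via the inclusion form of the chain rule and the known formula $\partial\lambda_\mathrm{max}(M)=\mathrm{conv}\{uu^\top:\nrm{u}=1,\ Mu=\lambda_\mathrm{max}(M)u\}$---is sound. The structural observation that every Clarke subgradient has the form $(P,-\lambda P)$ with $P\succeq0$, $P\neq0$, and $\ang{P,X-\lambda Y}=0$ is exactly what drives the pseudoconvexity inequality, and your passage from $\lambda_1>\lambda_\mathrm{max}(X_2,Y_2)$ to the strict relation $\lambda_1 Y_2-X_2\succ0$ via conjugation by $Y_2^{1/2}$ is the right way to secure the strict sign. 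One small presentational point: the phrase ``equivalently, using the known sensitivity formulas'' is doing real work, since computing $D_Y\Phi^*$ directly is messy (it involves the Fr\'echet derivative of $Y\mapsto Y^{-1/2}$); the identification $D\Phi(X,Y)^*[uu^\top]=(vv^\top,-\lambda vv^\top)$ with $v=Y^{-1/2}u$ does hold, but if you want the derivation to be fully self-contained it is cleaner to bypass $\Phi$ altogether and obtain the subdifferential from the Rayleigh-quotient representation $\lambda_\mathrm{max}(X,Y)=\max_{\nrm{w}=1}(w^\top Xw)/(w^\top Yw)$ together with Danskin/Clarke max-function rules, which give the same $(vv^\top/(v^\top Yv),\,-\lambda\,vv^\top/(v^\top Yv))$ without the square-root calculus.
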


For positive definite matrices, the minimization of the maximum generalized eigenvalue and the maximization of the minimum one with the same two matrices switched is equivalent. Indeed, we can easily show that
\e{
\lambda_\mathrm{min}(X,Y)=\frac{1}{\lambda_\mathrm{max}(Y,X)}
}
holds for any $(X,Y)\in\sn_{\succ0}\times\sn_{\succ0}$. This relationship also holds for the extended definition presented in Section 2.2.2 (see Appendix A).

\subsubsection{Singular positive semidefinite matrices}

The definition \eqref{ge} is not well-defined for singular matrices, since any $\lambda\in\r$ satisfies \eqref{ge} if there exists $v\neq0$ such that $v\in\kernel X\cap\kernel Y$. This is also evident from the fact that the denominator of the Rayleigh quotient \eqref{rayleigh} can be zero if $Y$ is singular. 

Achtziger and Ko{\v{c}}vara \cite{achtziger07siam} extend the definition of the minimum generalized eigenvalue to possibly singular positive semidefinite matrices. They consider two symmetric-matrix-valued functions $K,M:\rm_{\ge0}\to\sn_{\succeq0}$ called the stiffness and mass matrices, respectively, satisfying $\kernel M(x)\subseteq \kernel K(x)$ for any $x\in\rm_{\ge0}\backslash\{0\}$ (see Sections 4 and 5 for details). Then, they define $\lambda_\mathrm{min}(K(\cdot),M(\cdot)):\rm_{\ge0}\backslash\{0\}\to\r$ by
\al{
\lambda_\mathrm{min}(K(x),M(x))
& \coloneq \inf\{\lambda\in\r\mid\exists v\in\rm\backslash\kernel M(x)\text{ s.t. }K(x)v=\lambda M(x)v\}\\
& = \sup\{\lambda\in\r\mid K(x)-\lambda M(x)\succeq0\}\\
& = \underset{v\notin\kernel M(x)}{\inf}\frac{v^\top K(x)v}{v^\top M(x)v}.
\label{ac}
}
We further generalize the definitions of the maximum and minimum generalized eigenvalue with matrix variables in Section 3.

\subsection{Epi-convergence}

Variational analysis provides a convenient tool called the epi-convergence to consider the convergence of optimal solutions of approximated optimization problems to those of the original optimization problems. This tool is especially useful when we consider extended real-valued or discontinuous functions. The following definitions and propositions are all from \cite{rockafellar98}.

The epi-convergence of functions is defined by the set convergence of epigraphs of functions.

\begin{dfn}[Epigraph]
The epigraph of a function $f:\rm\to(-\infty,\infty]$ is defined by
\e{
\mathrm{epi} f\coloneq\{(x,\alpha)\in\rm\times\r\mid f(x)\le\alpha\}.
}
\end{dfn}

Properties of a function and its epigraph are related to each other:
\begin{itemize}
\item A function $f:\rm\to(-\infty,\infty]$ is lower semi-continuous if and only if its epigraph is closed in $\rm\times\r$.
\item A function $f:\rm\to(-\infty,\infty]$ is convex if and only if its epigraph is convex in $\rm\times\r$.
\end{itemize}

\begin{dfn}(Epi-convergence)
A sequence of functions $f^k:\rm\to(-\infty,\infty]$ is said to epi-converge to the epi-limit $f:\rm\to(-\infty,\infty]$ if the sequence of their epigraphs $\mathrm{epi} f^k$ converges to $\mathrm{epi} f$ in the sense of the Painlev\'{e}--Kuratowski set convergence.
\end{dfn}

\begin{dfn}(Painlev\'{e}--Kuratowski set convergence)
Let $(X,d)$ be a metric space. Define $d(x,A)\coloneq\underset{y\in A}{\inf}d(x,y)$ for any $x\in X$ and $A\subseteq X$. When both the outer limit
\e{
\underset{k\to\infty}{\lim\,\sup}A^k\coloneq\{x\in X\mid\underset{k\to\infty}{\lim\,\inf}\ d(x,A^k)\}
}
and the inner limit
\e{
\underset{k\to\infty}{\lim\,\sup}A^k\coloneq\{x\in X\mid\underset{k\to\infty}{\lim\,\sup}\ d(x,A^k)\}
}
are equal to $A^*\subseteq X$, a sequence of sets $A^k$ with $A^k\subseteq X$ is said to converge to $A^*$ in the sense of Painlev\'{e}--Kuratowski.
\end{dfn}

There is also an equivalent definition of epi-convergence without using a notion of set convergence (See \cite[Proposition 7.2]{rockafellar98}). Epi-convergence is often called $\Gamma$-convergence in the theory of calculus of variations and partial differential equations.

Epi-convergence defines the convergence of functions using the convergence of sets (epigraphs). Consequently, epi-convergent functions have nice properties such that certain sets associated with the sequence of functions (optimal solution set, subdifferential set, etc.) also converge to the sets of the epi-limit function in some sense and under certain conditions. The following proposition is the main tool of our analysis, which states that epi-convergence ensures, in some sense, the convergence of the optimal values and solutions.

\begin{prp}[Epi-convergence and convergence in minimization {\cite[Theorem 7.33]{rockafellar98}}]
Suppose that a sequence of functions $f^k:\rm\to(-\infty,\infty]$ has bounded effective domains\footnote{The original condition in \cite[Theorem 7.33]{rockafellar98} is the eventual level boundedness of $\{f^k\}$, which is a weaker assumption than the bounded effective domains. We use the latter assumption for simplicity.} $\domain f^k$, and that $f^k$ epi-converges to $f$ with $f^k$ and $f$ proper and lower semi-continuous. Then
\e{
\underset{k\to\infty}{\lim}\underset{x\in\rm}{\inf} f^k(x) = \underset{x\in\rm}{\inf} f(x)
}
and any accumulation points of $\{x^k\}$ with $x^k\in\underset{x\in\rm}{\mathrm{arg\,min}}f^k(x)$ belong to $\underset{x\in\rm}{\mathrm{arg\,min}}f(x)$. If $\underset{x\in\rm}{\mathrm{arg\,min}}f(x)$ is a singleton, $\{x^k\}$ converges to the minimizer of $f$.
\label{p_conv}
\end{prp}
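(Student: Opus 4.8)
\medskip
\noindent\textbf{Proof proposal.}
The plan is to reduce everything to the sequential characterization of epi-convergence \cite[Proposition~7.2]{rockafellar98}: $f^k$ epi-converges to $f$ if and only if (a) for every $x\in\rm$ and every sequence $x^k\to x$ one has $\liminf_{k\to\infty}f^k(x^k)\ge f(x)$, and (b) for every $x\in\rm$ there is a sequence $x^k\to x$ with $\limsup_{k\to\infty}f^k(x^k)\le f(x)$. I would first record that each $f^k$ attains its infimum: being proper and lower semi-continuous with $\domain f^k$ bounded, $f^k$ attains its infimum over the compact set $\overline{\domain f^k}$, so one may fix $x^k\in\mathrm{arg\,min}\,f^k$, and this point also minimizes $f^k$ over all of $\rm$.

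Next I would prove the ``easy'' inequality $\limsup_{k\to\infty}(\inf f^k)\le\inf f$: for any $x$, choosing a recovery sequence $x^k\to x$ as in (b) and using $\inf f^k\le f^k(x^k)$ gives $\limsup_k(\inf f^k)\le\limsup_k f^k(x^k)\le f(x)$, and taking the infimum over $x$ finishes it.

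The reverse inequality and the statement about minimizers are where the boundedness of the domains enters. Since all $x^k$ lie in a common bounded set, any subsequence has a convergent sub-subsequence $x^{k_j}\to\bar x$; then (a) yields $f(\bar x)\le\liminf_j f^{k_j}(x^{k_j})=\liminf_j(\inf f^{k_j})$, and combining with the easy inequality forces the chain $\inf f\le f(\bar x)\le\liminf_j(\inf f^{k_j})\le\limsup_k(\inf f^k)\le\inf f$ to collapse to equalities. In particular $f(\bar x)=\inf f$, so $\mathrm{arg\,min}\,f$ is nonempty and contains $\bar x$; applying the same extraction to a subsequence realizing $\liminf_k(\inf f^k)$ shows $\liminf_k(\inf f^k)\ge\inf f$, and with the easy inequality this gives $\inf f^k\to\inf f$. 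For any accumulation point $\bar x$ of $\{x^k\}$, applying (a) along the relevant subsequence together with $\inf f^k\to\inf f$ gives $f(\bar x)\le\inf f$, i.e.\ $\bar x\in\mathrm{arg\,min}\,f$; and if $\mathrm{arg\,min}\,f=\{\bar x\}$, then $\{x^k\}$ is a bounded sequence whose only possible accumulation point is $\bar x$, hence $x^k\to\bar x$.

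I expect the main obstacle to be the bookkeeping in this extended-real-valued minimization setting: guaranteeing that the $f^k$ actually have minimizers and that these minimizers remain in a common bounded set (precisely the role of ``bounded effective domains'', or more generally eventual level boundedness), and arranging the liminf/limsup chain so that a single limit $\inf f^k\to\inf f$ is obtained rather than merely a subsequential one. Once the extraction is set up, properties (a) and (b) do the rest almost mechanically.
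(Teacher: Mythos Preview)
The paper does not supply its own proof of this proposition; it is quoted from \cite[Theorem~7.33]{rockafellar98} as a known result. Your argument is the standard one and is correct: the sequential characterization of epi-convergence gives the $\limsup$ inequality via recovery sequences and, together with compactness of the minimizing sequence, the $\liminf$ inequality and the statement about accumulation points.

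The only step worth tightening is ``since all $x^k$ lie in a common bounded set'': this requires reading the hypothesis ``bounded effective domains'' as \emph{uniformly} bounded (all $\domain f^k$ contained in one fixed bounded set). That is indeed the intended reading---the paper's footnote says this assumption is stronger than eventual level boundedness, which would be false if only individual boundedness were meant, and in fact the conclusion can fail in that weaker setting (take $f^k$ equal to $1$ on $[-1,1]$, $0$ on $[k,k+1]$, and $+\infty$ elsewhere). You already flag precisely this issue in your closing paragraph, so there is no substantive gap.
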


We use the following proposition to prove the epi-convergence of the proposed approximation.

\begin{prp}[A condition of epi-convergence {\cite[Proposition 7.4(d)]{rockafellar98}}]
If a sequence of functions $f^k:\rm\to(-\infty,\infty]$ is nondecreasing, i.e.~$f^k(x)\le f^{k+1}(x)$ for any $x$, then $f^k$ epi-converges to $\sup_k\,\mathrm{cl} f^k$.
\label{p_cond}
\end{prp}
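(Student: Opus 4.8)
\emph{Approach.} The plan is to verify epi-convergence directly from its definition via Painlev\'e--Kuratowski set convergence of the epigraphs, exploiting the fact that monotonicity of $\{f^k\}$ makes $\{\mathrm{epi} f^k\}$ a \emph{nonincreasing} (nested) sequence of sets, for which the set limit has a simple closed form. I would then translate that closed form back into the language of functions.

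First I would record the elementary observation that $f^k(x)\le f^{k+1}(x)$ for all $x$ is equivalent to $\mathrm{epi} f^{k+1}\subseteq\mathrm{epi} f^k$; hence $C^k\coloneq\mathrm{epi} f^k$ is a nonincreasing sequence of subsets of $\rm\times\r$. Next I would establish (or invoke from \cite{rockafellar98}) the set-convergence lemma that for any nonincreasing sequence of sets $\{C^k\}$ the outer and inner limits coincide and equal $\bigcap_k\mathrm{cl}\,C^k$. The inclusion $\liminf_k C^k\supseteq\bigcap_k\mathrm{cl}\,C^k$ follows by a diagonal argument: if $x\in\mathrm{cl}\,C^k$ for every $k$, choose $x^k\in C^k$ with $d(x^k,x)\le 1/k$, so $x^k\to x$. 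The reverse inclusion $\limsup_k C^k\subseteq\bigcap_k\mathrm{cl}\,C^k$ uses nestedness: a cluster point of any selection $x^{k_j}\in C^{k_j}$ lies in $\mathrm{cl}\,C^m$ for each fixed $m$, because $C^{k_j}\subseteq C^m$ once $k_j\ge m$. Combined with the general inclusion $\liminf_k C^k\subseteq\limsup_k C^k$, this pins the Painlev\'e--Kuratowski limit down to $\bigcap_k\mathrm{cl}\,C^k$.

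It then remains to identify $\bigcap_k\mathrm{cl}(\mathrm{epi} f^k)$ as the epigraph of $\sup_k\,\mathrm{cl} f^k$. I would use the standard epigraphical-closure identity $\mathrm{cl}(\mathrm{epi} f^k)=\mathrm{epi}(\mathrm{cl} f^k)$, together with the tautology
\[
(x,\alpha)\in\bigcap_k\mathrm{epi}(\mathrm{cl} f^k)\iff(\mathrm{cl} f^k)(x)\le\alpha\ \text{ for all }k\iff\bigl(\sup_k\,\mathrm{cl} f^k\bigr)(x)\le\alpha,
\]
so that $\bigcap_k\mathrm{cl}(\mathrm{epi} f^k)=\mathrm{epi}\bigl(\sup_k\,\mathrm{cl} f^k\bigr)$. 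Hence $\mathrm{epi} f^k\to\mathrm{epi}\bigl(\sup_k\,\mathrm{cl} f^k\bigr)$ in the Painlev\'e--Kuratowski sense, which is exactly the statement that $f^k$ epi-converges to $\sup_k\,\mathrm{cl} f^k$; note that this limit is automatically lower semi-continuous, being a pointwise supremum of the lsc functions $\mathrm{cl} f^k$, as every epi-limit must be.

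I expect the main (and rather minor) obstacle to be bookkeeping around closures: one must pass the closure through the intersection in the correct direction — it is $\bigcap_k\mathrm{cl}\,C^k$, not $\mathrm{cl}\bigl(\bigcap_k C^k\bigr)=\mathrm{cl}\bigl(\mathrm{epi}(\sup_k f^k)\bigr)$, that arises as the set limit — and one must ensure $\mathrm{cl}(\mathrm{epi} f^k)$ is genuinely an epigraph (no vertical line), which is where a properness/non-$(-\infty)$ hypothesis on the $f^k$ enters; this is automatic in the settings used later in the paper. As an alternative route that avoids set convergence, one could verify the two one-sided conditions characterizing epi-convergence \cite[Proposition 7.2]{rockafellar98} with $g\coloneq\sup_k\,\mathrm{cl} f^k$: the $\liminf$-inequality $\liminf_k f^k(x^k)\ge g(x)$ for every $x^k\to x$ follows from $f^k(x^k)\ge(\mathrm{cl} f^m)(x^k)$ for $k\ge m$ and lower semi-continuity of $\mathrm{cl} f^m$, then letting $m\to\infty$; and a recovery sequence with $\limsup_k f^k(x^k)\le g(x)$ is obtained by picking, for each $k$, a point $x^k$ with $d(x^k,x)\le 1/k$ and $f^k(x^k)\le(\mathrm{cl} f^k)(x)+1/k\le g(x)+1/k$.
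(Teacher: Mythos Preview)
The paper does not supply its own proof of this proposition; it is quoted verbatim as \cite[Proposition~7.4(d)]{rockafellar98} and used as a black box. Your argument is correct and is essentially the standard one behind that result in \cite{rockafellar98}: translate monotonicity of $\{f^k\}$ into nestedness of $\{\mathrm{epi} f^k\}$, invoke the elementary fact that a nonincreasing sequence of sets Painlev\'e--Kuratowski-converges to $\bigcap_k\mathrm{cl}\,C^k$, and then identify $\bigcap_k\mathrm{cl}(\mathrm{epi} f^k)=\mathrm{epi}(\sup_k\mathrm{cl} f^k)$ via the definition of the closure of a function; your alternative route through the sequential characterization \cite[Proposition~7.2]{rockafellar98} is equally valid.
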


\begin{dfn}[Closure of function]
The closure of a function $f:\rm\to(-\infty,\infty]$, denoted by $\mathrm{cl}f:\rm\to(-\infty,\infty]$ is defined by
\e{
\mathrm{epi}\,\mathrm{cl}f=\mathrm{cl}\,\mathrm{epi}f,
}
where the second $\mathrm{cl}$ means the closure of the set $\mathrm{epi}f$.
\end{dfn}
The closure of $f$ is also called the lower semi-continuous envelope of $f$.

Note that, in this paper, we consider functions defined only on strict subsets of $\rm$. However, we can apply the definitions and propositions above stated for functions defined on the entire space $\rm$, just by setting the function values $\infty$ outside of the domains.

\section{General results}

We propose an extended definition and an epi-convergent approximation of the maximum generalized eigenvalue of possibly singular semidefinite matrices. All of the results can easily be applied to the minimum generalized eigenvalue; see Appendix \ref{A1}.

\subsection{Extended definition}

We generalize the definitions of the maximum and minimum generalized eigenvalue of possibly singular positive semidefinite matrices. Although our extension is in the same manner as the extended definition introduced in \cite{achtziger07siam}, our definition is more general. We consider a function $\lambda_\mathrm{max}(\cdot,\cdot):\sn_{\succeq0}\times\sn_{\succeq0}\to(-\infty,\infty]$, namely the arguments can be any symmetric positive semidefinite matrices. Then, we consider a composition of $\lambda_\mathrm{max}(\cdot,\cdot)$ with any continuous symmetric-positive-semidefinite-matrix-valued functions $A,B:S\to\sn_{\succeq0}$ defined on $S\subseteq\rm$. 

\begin{dfn}
We define the maximum generalized eigenvalue function $\lambda_\mathrm{max}(\cdot,\cdot):\sn_{\succeq0}\times\sn_{\succeq0}\to(-\infty,\infty]$ by
\e{
\lambda_\mathrm{max}(X,Y)\coloneq
\begin{cases}
\underset{v\notin\kernel Y}{\sup}\frac{v^\top Xv}{v^\top Yv} & (Y\neq0),\\
\infty & (X\neq0,\ Y=0),\\
0 & (X=Y=0).\\
\end{cases}
\label{dfn1}
}
\end{dfn}

\begin{rmk}
The above definition is consistent with Definition \ref{ge} and \eqref{rayleigh} for any $Y\succ0$. The above definition means that we treat the Rayleigh quotient as $\mathrm{const.}/0=\infty$ and $0/0=0$.
\end{rmk}

We have the following equivalent definition, which is useful in our analysis.

\begin{thm}
The maximum generalized eigenvalue function $\lambda_\mathrm{max}(\cdot,\cdot):\sn_{\succeq0}\times\sn_{\succeq0}\to(-\infty,\infty]$ defined by \eqref{dfn1} has the following equivalent definition:
\e{
\lambda_\mathrm{max}(X,Y)=\inf\{\alpha\ge0\mid\alpha Y-X\succeq0\}.
\label{dfn2}
}
Note that if there does not exist $\alpha\ge0$ satisfying $\alpha Y-X\succeq0$, we define $\inf\{\alpha\ge0\mid\alpha Y-X\succeq0\}=\infty$.
Consequently, $\lambda_\mathrm{max}(\cdot,\cdot)$ is nonnegative, lower semi-continuous, and quasiconvex on $\sn_{\succeq0}\times\sn_{\succeq0}$.
\label{t_dfn}
\end{thm}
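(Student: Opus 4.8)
The plan is to prove the equivalence \eqref{dfn1}$=$\eqref{dfn2} by a case analysis that mirrors the three cases in \eqref{dfn1}, and then to deduce nonnegativity, lower semicontinuity, and quasiconvexity from the semidefinite form \eqref{dfn2}. The single elementary fact used throughout is that, since $Y\succeq0$, one has $v\notin\kernel Y\iff v^\top Yv>0$ (because $v^\top Yv=0\iff Y^{1/2}v=0\iff Yv=0$), and similarly for $X$.

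For the equivalence, the cases $X\neq0,\ Y=0$ and $X=Y=0$ are immediate from the convention $\inf\emptyset=\infty$: $\alpha Y-X=-X$ is positive semidefinite for no $\alpha\ge0$ when $0\neq X\succeq0$, while $\alpha Y-X=0\succeq0$ for every $\alpha\ge0$ when $X=Y=0$. So the only real work is the case $Y\neq0$, which I would split according to whether $\kernel Y\subseteq\kernel X$. If $\kernel Y\subseteq\kernel X$, then $v^\top(\alpha Y-X)v=0$ for all $v\in\kernel Y$, so $\alpha Y-X\succeq0$ is equivalent to $v^\top(\alpha Y-X)v\ge0$ for all $v\notin\kernel Y$, i.e.\ to $\alpha\ge\sup_{v\notin\kernel Y}\frac{v^\top Xv}{v^\top Yv}=:s$; since $X\succeq0$ makes every such ratio nonnegative and the index set is nonempty (as $Y\neq0$), we have $s\ge0$, hence $\{\alpha\ge0\mid\alpha Y-X\succeq0\}=[s,\infty)$ and its infimum is $s$, matching \eqref{dfn1}. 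If instead $\kernel Y\not\subseteq\kernel X$, choose $v_0\in\kernel Y\setminus\kernel X$ (so $v_0^\top Xv_0>0$) and any $w\notin\kernel Y$; then $(v_0+tw)^\top Y(v_0+tw)=t^2w^\top Yw>0$ for $t\neq0$ while $(v_0+tw)^\top X(v_0+tw)\to v_0^\top Xv_0>0$ as $t\to0$, so the Rayleigh quotient along $v_0+tw$ tends to $+\infty$ and the supremum in \eqref{dfn1} equals $+\infty$; simultaneously $v_0^\top(\alpha Y-X)v_0=-v_0^\top Xv_0<0$ for every $\alpha$, so $\{\alpha\ge0\mid\alpha Y-X\succeq0\}=\emptyset$ and \eqref{dfn2} also equals $+\infty$.

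Granting \eqref{dfn1}$=$\eqref{dfn2}, nonnegativity is immediate since the infimum runs over a subset of $[0,\infty)$. For the remaining two properties I would describe all sublevel sets: for $\alpha<0$ the set $\{(X,Y)\in\sn_{\succeq0}\times\sn_{\succeq0}\mid\lambda_\mathrm{max}(X,Y)\le\alpha\}$ is empty, and for $\alpha\ge0$ it equals $\{(X,Y)\in\sn_{\succeq0}\times\sn_{\succeq0}\mid\alpha Y-X\succeq0\}$. This identity follows because $\{\beta\ge0\mid\beta Y-X\succeq0\}$ is upward closed ($\gamma Y-X=(\beta Y-X)+(\gamma-\beta)Y\succeq0$ for $\gamma\ge\beta\ge0$) and, when nonempty, closed (closedness of the positive semidefinite cone lets one pass to the limit along $\beta_n\downarrow\lambda_\mathrm{max}(X,Y)$), so it is the half-line $[\lambda_\mathrm{max}(X,Y),\infty)$, whence $\lambda_\mathrm{max}(X,Y)\le\alpha\iff\alpha Y-X\succeq0$ for $\alpha\ge0$. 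Each sublevel set is then a finite intersection of closed convex sets---the cones $\{X\succeq0\}$, $\{Y\succeq0\}$, and the preimage of the positive semidefinite cone under the linear map $(X,Y)\mapsto\alpha Y-X$---hence closed and convex, and this persists after extending $\lambda_\mathrm{max}$ by $+\infty$ outside $\sn_{\succeq0}\times\sn_{\succeq0}$ (as in the remark at the end of Section~2.3) because that set is itself closed. Closedness of all sublevel sets is lower semicontinuity and convexity of all sublevel sets is quasiconvexity.

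I expect the main obstacle to be the singular case $\kernel Y\not\subseteq\kernel X$: one must spot this dichotomy, produce the perturbation $v_0+tw$ witnessing that the Rayleigh supremum blows up, and verify that the extra constraint ``$\alpha\ge0$'' in \eqref{dfn2} does not change the infimum---which works precisely because $X\succeq0$ forces the Rayleigh supremum to be nonnegative. Everything after the equivalence is routine bookkeeping with the positive semidefinite cone.
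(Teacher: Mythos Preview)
Your proof is correct and follows essentially the same route as the paper. The only cosmetic difference is organizational: the paper proves the two inequalities $\alpha^*\ge\beta^*$ and $\alpha^*\le\beta^*$ separately (deriving $\kernel Y\subseteq\kernel X$ from $\beta^*<\infty$ via the same perturbation idea), whereas you split up front on whether $\kernel Y\subseteq\kernel X$; the key ingredients---the blow-up of the Rayleigh quotient along a perturbation of a vector in $\kernel Y\setminus\kernel X$, and the identification of sublevel sets with the closed convex sets $\{(X,Y)\mid\alpha Y-X\succeq0\}$---are identical.
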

\begin{proof}
The equivalence is obvious when $X=Y=0$ and $X\neq0,\ Y=0$. We consider the rest cases, i.e.~$Y\neq0$. First, we show
\e{
\inf\{\alpha\ge0\mid\alpha Y-X\succeq0\}\ge\underset{v\notin\kernel Y}{\sup}\frac{v^\top Xv}{v^\top Yv}.
\label{ineq1}
}
Let $\alpha^*=\inf\{\alpha\ge0\mid\alpha Y-X\succeq0\}$. When $\alpha^*=\infty$, \eqref{ineq1} obviously holds. When $\alpha^*<\infty$, we have $\alpha^* Y-X\succeq0$ by the closedness of the positive semidefinite cone and
\al{
\alpha^* Y-X\succeq0
& \iff \alpha^* v^\top Yv \ge v^\top Xv\ \ (\forall v\in\rn)\\
& \implies \alpha^* \ge \frac{v^\top Xv}{v^\top Yv}\ \ (\forall v\notin\kernel Y),
}
which implies \eqref{ineq1}.

Next, we show
\e{
\inf\{\alpha\ge0\mid\alpha Y-X\succeq0\}\le\underset{v\notin\kernel Y}{\sup}\frac{v^\top Xv}{v^\top Yv}.
\label{ineq2}
}
Let 
\e{
\beta^*=\underset{v\notin\kernel Y}{\sup}\frac{v^\top Xv}{v^\top Yv}.
\label{eq1}
}
When $\beta^*=\infty$, \eqref{ineq2} obviously holds. We show that $\kernel Y\subseteq\kernel X$ if $\beta^*<\infty$ by contradiction. Suppose that there exists $v$ such that $v\in\kernel Y$ and $v\notin\kernel X$. By taking a sequence $\{u_k\}$ of vectors such that $u_k\perp\kernel Y$ (note that $Y\neq0$), $u_k\neq0$, and $\|u_k\|\to0$ and setting $v_k=v+u_k\notin\kernel Y$, we obtain
\e{
\frac{v_k^\top Xv_k}{v_k^\top Yv_k}\ge\frac{v^\top Xv}{u_k^\top Yu_k}\to\infty,
}
which contradicts to $\beta^*<\infty$. Now we have $\kernel Y\subseteq\kernel X$, which implies $v^\top(\beta^*Y-X)v=0$ holds for any $v\in\kernel Y$. Also, \eqref{eq1} implies $v^\top(\beta^*Y-X)v\ge0$ holds for any $v\notin\kernel Y$. Thus, $\beta^*Y-X\succeq0$ holds, which implies \eqref{ineq2}.

The above discussions imply that the equality holds for \eqref{ineq1} and \eqref{ineq2}, which shows the equivalence of \eqref{dfn1} and \eqref{dfn2}.

The nonnegativity is obvious from \eqref{dfn2}. The lower semi-continuity and quasiconvexity of $\lambda_\mathrm{max}(\cdot,\cdot):\sn_{\succeq0}\times\sn_{\succeq0}\to[0,\infty]$ immediately follows by the fact that the sublevel set
\e{
\{(X,Y)\in\sn_{\succeq0}\times\sn_{\succeq0}\mid\lambda_\mathrm{max}(X,Y)\le\alpha\}=\{(X,Y)\in\sn_{\succeq0}\times\sn_{\succeq0}\mid\alpha Y-X\succeq0\}
}
is closed and convex for any $\alpha\in\r$ (see e.g.~\cite[Theorem 1.6]{rockafellar98}).
\end{proof}

A simple observation of fixing $Y=0$ and $X\to0$ in \eqref{dfn1} shows that $\lambda_\mathrm{max}(\cdot,\cdot)$ is disconituous.

\subsection{Epi-convergent approximation}

We consider the function
\e{
(X,Y)\mapsto\lambda_\mathrm{max}(X,Y+\epsilon I)
}
defined on $\sn_{\succeq0}\times\sn_{\succeq0}$, which is an epi-convergent approximation of $(X,Y)\mapsto\lambda_\mathrm{max}(X,Y)$ as the following theorem suggests. Namely, it is an approximation that epi-converges to the original function and is real-valued and continuous, hence easier to treat numerically. Note that the technique of adding $\epsilon I$ to possibly singular semidefinite matrices to make it regular is often used in matrix analysis. See \cite{bhatia13} for example.

\begin{thm}
The function $\lambda_\mathrm{max}(\cdot,\cdot+\epsilon I):\sn_{\succeq0}\times\sn_{\succeq0}\to\r$ with $\epsilon>0$ is continuous and epi-converges to $\lambda_\mathrm{max}(\cdot,\cdot):\sn_{\succeq0}\times\sn_{\succeq0}\to(-\infty,\infty]$ defined by \eqref{dfn1}.
\label{t_epi}
\end{thm}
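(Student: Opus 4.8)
The plan is to prove the two claims separately: continuity of $\lambda_\mathrm{max}(\cdot,\cdot+\epsilon I)$ for fixed $\epsilon>0$, and then epi-convergence as $\epsilon\downarrow0$ (working along a sequence $\epsilon_k\downarrow0$, writing $f^k(X,Y)\coloneq\lambda_\mathrm{max}(X,Y+\epsilon_k I)$).

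For continuity, the key point is that for $Y\in\sn_{\succeq0}$ we have $Y+\epsilon I\succ0$, so by Theorem~\ref{t_dfn} (or the Rayleigh-quotient formula) $\lambda_\mathrm{max}(X,Y+\epsilon I)=\lambda_\mathrm{max}\bigl((Y+\epsilon I)^{-1/2}X(Y+\epsilon I)^{-1/2}\bigr)$, which is finite. The map $(X,Y)\mapsto(Y+\epsilon I)^{-1/2}X(Y+\epsilon I)^{-1/2}$ is continuous on $\sn_{\succeq0}\times\sn_{\succeq0}$ (matrix inversion and the positive square root are continuous on $\sn_{\succ0}$), and the maximum standard eigenvalue is continuous on $\sn$, so the composition is continuous; in particular it is real-valued, so $f^k$ is proper with $\domain f^k=\sn_{\succeq0}\times\sn_{\succeq0}$, which is closed, giving lower semi-continuity as well.

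For epi-convergence I would invoke Proposition~\ref{p_cond}: it suffices to show that $k\mapsto f^k$ is nondecreasing as $\epsilon_k\downarrow0$ and that $\sup_k\mathrm{cl}\,f^k=\lambda_\mathrm{max}(\cdot,\cdot)$. Monotonicity follows from the characterization \eqref{dfn2}: if $\alpha(Y+\epsilon I)-X\succeq0$ and $\epsilon'\le\epsilon$ then $\alpha(Y+\epsilon' I)-X=\alpha(Y+\epsilon I)-X-\alpha(\epsilon-\epsilon')I$, which is \emph{not} obviously positive semidefinite — so instead I use the Rayleigh quotient: $v^\top(Y+\epsilon'I)v\le v^\top(Y+\epsilon I)v$ for all $v$ when the numerator $v^\top Xv\ge0$, and since $X\succeq0$ every term $v^\top Xv/(v^\top(Y+\epsilon I)v)$ increases as $\epsilon$ decreases; also when $Y+\epsilon I\succ0$ the denominator never vanishes, so $f^k(X,Y)=\sup_{v\neq0}v^\top Xv/\bigl(v^\top(Y+\epsilon_k I)v\bigr)$ is nondecreasing in $k$. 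Since each $f^k$ is already lower semi-continuous, $\mathrm{cl}\,f^k=f^k$, so the epi-limit is the pointwise supremum $f^\infty(X,Y)\coloneq\sup_k f^k(X,Y)=\lim_k f^k(X,Y)$.

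It then remains to identify $f^\infty$ with $\lambda_\mathrm{max}(\cdot,\cdot)$ of \eqref{dfn1}, and this is the main obstacle — a case analysis matching the three branches of the definition. When $Y\neq0$: for any $v\notin\kernel Y$ we have $v^\top X v/(v^\top(Y+\epsilon_k I)v)\to v^\top Xv/v^\top Yv$, giving $f^\infty\ge\sup_{v\notin\kernel Y}v^\top Xv/v^\top Yv=\lambda_\mathrm{max}(X,Y)$; for the reverse inequality I would use \eqref{dfn2}, noting that if $\alpha Y-X\succeq0$ then $\alpha(Y+\epsilon_k I)-X=(\alpha Y-X)+\alpha\epsilon_k I\succeq0$ for $\alpha\ge0$, hence $f^k(X,Y)\le\alpha$ for all $k$ and thus $f^\infty\le\lambda_\mathrm{max}(X,Y)$ (when $\lambda_\mathrm{max}(X,Y)=\infty$ there is nothing to prove). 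When $X\neq0$, $Y=0$: $f^k(X,0)=\lambda_\mathrm{max}(X,\epsilon_k I)=\lambda_\mathrm{max}(X)/\epsilon_k\to\infty=\lambda_\mathrm{max}(X,0)$. When $X=Y=0$: $f^k(0,0)=0$ for all $k$, matching the definition. This establishes $f^\infty=\lambda_\mathrm{max}(\cdot,\cdot)$ and completes the proof; the only delicate point is keeping the sign conditions ($\alpha\ge0$, $X\succeq0$) straight so that the semidefinite and Rayleigh-quotient manipulations are valid.
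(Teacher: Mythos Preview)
Your proof is correct and follows the same overall scaffold as the paper: continuity via the standard-eigenvalue reduction $\lambda_\mathrm{max}(X,Y+\epsilon I)=\lambda_\mathrm{max}\bigl((Y+\epsilon I)^{-1/2}X(Y+\epsilon I)^{-1/2}\bigr)$, then epi-convergence via Proposition~\ref{p_cond} after checking monotonicity in $\epsilon$ and identifying the pointwise limit with $\lambda_\mathrm{max}(\cdot,\cdot)$.

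The execution differs in two places. For monotonicity, the paper argues directly from \eqref{dfn2}: if $\alpha(Y+\epsilon' I)-X\succeq0$ with $\epsilon'\le\epsilon$ and $\alpha\ge0$, then $\alpha(Y+\epsilon I)-X\succeq0$, so the feasible set grows with $\epsilon$ and the infimum decreases. You set up this implication in the wrong direction, abandoned it, and fell back on the Rayleigh quotient (using $X\succeq0$); both routes are fine. For the pointwise limit, the paper splits according to whether $\kernel Y\subseteq\kernel X$ and, in the finite case, uses a $\delta$-near-optimal $u$ to sandwich $\lambda_\mathrm{max}(X,Y)-\lambda_\mathrm{max}(X,Y+\epsilon I)$. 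Your argument is tighter: the Rayleigh-quotient lower bound plus the one-line upper bound ``$\alpha Y-X\succeq0$ and $\alpha\ge0$ imply $\alpha(Y+\epsilon_k I)-X\succeq0$, hence $f^k\le\alpha$'' gives $f^\infty=\lambda_\mathrm{max}(X,Y)$ for all $Y\neq0$ without the $\delta$-device or a separate $\kernel Y\not\subseteq\kernel X$ case. The trade-off is that your upper bound relies explicitly on the nonnegativity $\alpha\ge0$ built into \eqref{dfn2}, whereas the paper's $\delta$-argument would also work for generalized Rayleigh quotients without a sign restriction.
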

\begin{proof}
The continuity of $\lambda_\mathrm{max}(\cdot,\cdot+\epsilon I)$ follows by the same argument of the continuity of $\lambda_\mathrm{max}(\cdot,\cdot)$ on $\sn_{\succ0}\times\sn_{\succ0}$ discussed in Section 2.2.

For any $X,Y\in\sn_{\succeq0}$ and $0<\epsilon\le\epsilon'$, we have
\e{
\alpha Y-X+\epsilon I\succeq 0 \implies \alpha Y-X+\epsilon' I\succeq 0,
}
hence, by the definition \eqref{dfn2},
\e{
\lambda_\mathrm{max}(X,Y+\epsilon I)\ge\lambda_\mathrm{max}(X,Y+\epsilon' I).
}
Therefore, by Proposition \ref{p_cond}, $(X,Y)\mapsto\lambda_\mathrm{max}(X,Y+\epsilon I)$ epi-converges to $(X,Y)\mapsto\sup_{\epsilon>0}\mathrm{cl}\lambda_\mathrm{max}(X,Y+\epsilon I)$, which is equal to $(X,Y)\mapsto\lim_{\epsilon\to0}\lambda_\mathrm{max}(X,Y+\epsilon I)$ due to the continuity with respect to $(X,Y)$ and the monotonicity with respect to $\epsilon$.

We show that $\lim_{\epsilon\to0}\lambda_\mathrm{max}(X,Y+\epsilon I)=\lambda_\mathrm{max}(X,Y)$ for any $X,Y\in\sn_{\succeq0}$. 

For any $X,Y\in\sn_{\succeq0}$ such that there exists $u$ satisfying $u\notin\kernel X$ and $u\in\kernel Y$ (including the case with $X\neq0$ and $Y=0$), we have $\lambda_\mathrm{max}(X,Y)=\infty$ since $u^\top(\alpha Y-X)u=-u^\top Xu<0$ implies $\alpha Y-X\nsucceq0$ for any $\alpha\ge0$. Moreover, by using such $u$ with the unit norm, which is independent of $\epsilon$, we obtain
\al{
\lambda_\mathrm{max}(X,Y+\epsilon I)
& = \underset{\|v\|=1}{\sup}\frac{v^\top Xv}{v^\top (Y+\epsilon I)v}\\
& \ge \frac{u^\top Xu}{u^\top (Y+\epsilon I)u}\\
& = \frac{u^\top Xu}{\epsilon}\\
& \to \infty\ \ (\epsilon\to0).
}

For any $X,Y\in\sn_{\succeq0}$ such that there does not exist $u$ satisfying $u\notin\kernel X$ and $u\in\kernel Y$ (this is the case when $\kernel Y\subseteq\kernel X$ holds including when $Y$ is regular), we have $\lambda_\mathrm{max}(X,Y)<\infty$ since $u^\top(\alpha Y-X)u=0$ for $u\in\kernel Y$ and $u^\top(\alpha Y-X)u\ge0$ for $u\notin\kernel Y$ and sufficiently large $\alpha\ge0$. We consider the two cases separately. (i) In the special case with $X=Y=0$, we have $\lambda_\mathrm{max}(0,0)=\lambda_\mathrm{max}(0,\epsilon I)=0$ for any $\epsilon>0$. (ii) In the rest case ($\kernel Y\subseteq\kernel X$ and $Y\neq0$) by continuity of $v\mapsto(v^\top Xv)/(v^\top Yv)$ on $\rm\backslash\kernel Y$, for any $\delta>0$, there exists $u\notin\kernel Y$ (near-optimal solution) such that
\e{
\lambda_\mathrm{max}(X,Y)\le\frac{u^\top Xu}{u^\top Yu}+\delta.
}
Thus, we obtain
\al{
0
& \le \lambda_\mathrm{max}(X,Y)-\lambda_\mathrm{max}(X,Y+\epsilon I)\\
& \le \frac{u^\top Xu}{u^\top Yu}+\delta-\frac{u^\top Xu}{u^\top (Y+\epsilon I)u}\\
& \to \delta\ \ (\epsilon\to0).
}
Since $\delta>0$ is arbitarary, we obtain $\lambda_\mathrm{max}(X,Y+\epsilon I)\to\lambda_\mathrm{max}(X,Y)\ (\epsilon\to0)$.

Therefore, for any fixed $X,Y\in\sn_{\succeq0}$, we obtain $\lim_{\epsilon\to0}\lambda_\mathrm{max}(X,Y+\epsilon I)=\lambda_\mathrm{max}(X,Y)$, i.e.~$\lambda_\mathrm{max}(\cdot,\cdot+\epsilon I)$ epi-converges to $\lambda_\mathrm{max}(\cdot,\cdot)$.
\end{proof}

\begin{rmk}
By Theorem \ref{t_epi}, we obtain the third equivalent definition of $(X,Y)\mapsto\lambda_\mathrm{max}(X,Y)$ defined by the epi-limit of $(X,Y)\mapsto\lambda_\mathrm{max}(X,Y+\epsilon I)$ when $\epsilon\to0$.
\end{rmk}

We obtain the following corollary for a composite function of the maximum generalized eigenvalue and symmetric-positive-semidefinite-matrix-valued functions.

\begin{crl}
Let $S\subseteq\rm$ be closed and $A,B:S\to\sn_{\succeq0}$ be continuous. The function $\lambda_\mathrm{max}(A(\cdot),B(\cdot)):S\to(-\infty,\infty]$ is nonnegative and lower semi-continuous on $S$. The approximation $\lambda_\mathrm{max}(A(\cdot),B(\cdot)+\epsilon I):S\to\r$ is continuous and epi-converges to $\lambda_\mathrm{max}(A(\cdot),B(\cdot))$ on $S$. Furthermore, if $A,B:\rm_{\ge0}\to\sn_{\succeq0}$ are affine, $\lambda_\mathrm{max}(A(\cdot),B(\cdot))$ is quasiconvex and $\lambda_\mathrm{max}(A(\cdot),B(\cdot)+\epsilon I)$ is pseudoconvex on $\rm_{\ge0}$.
\label{crl1}
\end{crl}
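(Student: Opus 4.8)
The plan is to obtain every assertion by precomposing the general results, Theorems~\ref{t_dfn} and~\ref{t_epi}, with the map $x\mapsto(A(x),B(x))$, using closedness of $S$ to extend all functions by $+\infty$ on $\rm\setminus S$ without destroying lower semi-continuity, and then to get the affine statements by adding an elementary convexity computation and, for pseudoconvexity, a nonsmooth chain rule.

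First I would dispose of the regularity and epi-convergence claims, which are essentially formal. Since $\lambda_\mathrm{max}(\cdot,\cdot)\ge0$ on $\sn_{\succeq0}\times\sn_{\succeq0}$ (Theorem~\ref{t_dfn}) and $x\mapsto(A(x),B(x))$ maps $S$ continuously into $\sn_{\succeq0}\times\sn_{\succeq0}$, the composite $\lambda_\mathrm{max}(A(\cdot),B(\cdot))$ is nonnegative; it is lower semi-continuous because a lower semi-continuous function composed with a continuous map is lower semi-continuous and the extension by $+\infty$ on the closed set $\rm\setminus S$ preserves this. The same composition with the continuous function $\lambda_\mathrm{max}(\cdot,\cdot+\epsilon I)$ from Theorem~\ref{t_epi} gives continuity of $\lambda_\mathrm{max}(A(\cdot),B(\cdot)+\epsilon I)$ on $S$. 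For the epi-convergence I would rerun the proof of Theorem~\ref{t_epi} with $(A(x),B(x))$ in place of $(X,Y)$: for fixed $x\in S$ and $0<\epsilon\le\epsilon'$ the implication $\alpha(B(x)+\epsilon I)-A(x)\succeq0\Rightarrow\alpha(B(x)+\epsilon' I)-A(x)\succeq0$ (add $\alpha(\epsilon'-\epsilon)I\succeq0$) shows $\epsilon\mapsto\lambda_\mathrm{max}(A(x),B(x)+\epsilon I)$ is nonincreasing, so along any $\epsilon_k\downarrow0$ the extended functions form a nondecreasing sequence of proper lower semi-continuous functions (using $S$ closed again). Proposition~\ref{p_cond} then gives epi-convergence to their pointwise supremum, which by monotonicity is the pointwise limit; by Theorem~\ref{t_epi} applied at $X=A(x),\,Y=B(x)$ this limit equals $\lambda_\mathrm{max}(A(x),B(x))$ on $S$, and is $+\infty$ off $S$, so the epi-limit is $\lambda_\mathrm{max}(A(\cdot),B(\cdot))$.

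Next I would treat the affine case. Quasiconvexity is immediate from Theorem~\ref{t_dfn}: for affine $A,B$ the level set $\{x\in\rm_{\ge0}\mid\lambda_\mathrm{max}(A(x),B(x))\le\alpha\}=\{x\in\rm_{\ge0}\mid\alpha B(x)-A(x)\succeq0\}$ is the intersection of the convex set $\rm_{\ge0}$ with the preimage of the convex cone $\sn_{\succeq0}$ under the affine map $x\mapsto\alpha B(x)-A(x)$, hence convex. For pseudoconvexity of the regularized composite I would write it as $g\circ L$, where $L(x)=(A(x),B(x)+\epsilon I)$ is affine with linear part $L_0$ and maps the convex set $\rm_{\ge0}$ into $\sn_{\succeq0}\times\sn_{\succ0}$, and $g=\lambda_\mathrm{max}(\cdot,\cdot)$. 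Two ingredients are needed. (a) $g$ is pseudoconvex on $\sn_{\succeq0}\times\sn_{\succ0}$: the pseudoconvexity on $\sn_{\succ0}\times\sn_{\succ0}$ is known (recalled in Section~2.2; see \cite{nishioka23coap}), and on $\sn_{\succeq0}\times\sn_{\succ0}$ the generalized Rayleigh quotient is still real-valued, locally Lipschitz and quasiconvex, with $g(X,Y)=\lambda_\mathrm{max}(Y^{-1/2}XY^{-1/2})$, so the defining implication of pseudoconvexity extends, essentially because no Clarke-stationary point that fails to be a global minimizer is created by letting only the first argument degenerate. (b) $g\circ L$ is locally Lipschitz on $\rm_{\ge0}$: on compacta $B(x)+\epsilon I\succeq\epsilon I$, whence its inverse square root is smooth there and $\lambda_\mathrm{max}$ is globally Lipschitz. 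Granting (a) and (b), the chain-rule inclusion $\partial(g\circ L)(x)\subseteq L_0^\top\partial g(L(x))$ (valid for Lipschitz $g$ and affine $L$ since Clarke subdifferentials are convex) transfers pseudoconvexity: any $w\in\partial(g\circ L)(x)$ is $w=L_0^\top h$ with $h\in\partial g(L(x))$, and $(g\circ L)(x)>(g\circ L)(y)$ forces $\langle h,L(y)-L(x)\rangle<0$, i.e.\ $\langle w,y-x\rangle=\langle h,L_0(y-x)\rangle<0$.

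The hard part will be ingredient (a): upgrading the pseudoconvexity of $\lambda_\mathrm{max}(\cdot,\cdot)$ from positive definite to merely positive semidefinite first argument, together with the nonsmooth calculus needed to push it through the affine substitution while verifying the local Lipschitz regularity demanded by the definition of a pseudoconvex function. Everything else—nonnegativity, lower semi-continuity, continuity of the approximation, epi-convergence, and quasiconvexity—should be a routine consequence of Theorems~\ref{t_dfn} and~\ref{t_epi}.
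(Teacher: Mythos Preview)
Your approach matches the paper's: rerun Theorem~\ref{t_epi} with $A(x),B(x)$ in place of $X,Y$ for the epi-convergence, and derive the remaining claims from Theorem~\ref{t_dfn} and the pseudoconvexity facts recalled in Section~2.2 (the paper's proof does exactly this, in a single sentence). You are more careful on the pseudoconvexity step---flagging the extension from $\sn_{\succ0}\times\sn_{\succ0}$ to $\sn_{\succeq0}\times\sn_{\succ0}$ and the affine chain-rule transfer---whereas the paper treats these as immediate from the discussion around Proposition~1 and \cite{nishioka23coap}, but the overall route is the same.
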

\begin{proof}
The epi-convergence of $\lambda_\mathrm{max}(A(\cdot),B(\cdot)+\epsilon I)$ can be shown in completely the same way as in Theorem \ref{t_epi}, by replacing $X,Y$ with $A(x),B(x)$. The rest is obvious from the discussion in Section 2.1 and Theorem \ref{t_dfn}.
\end{proof}

\section{Robust compliance optimization}

\subsection{Existing formulation}

We consider the robust (or worst-case) compliance minimization in topology optimization \cite{bental97,kanno15,nishioka23orl}. The compliance is a measure of the deformation of a structure caused by a static applied load. By minimizing it, we can obtain a structure with high stiffness for a given load. The design variable of topology optimization $x\in\rm$ consists of the cross-sectional areas of bars in truss topology optimization and the densities\footnote{$x_j=0$ means the $j$-th element is void and $x_j=1$ means it is filled with the material. We also penalize the intermediate values $(0,1)$ by the SIMP method, etc. See \cite{andreassen11,bendsoe04} for details.} of finite elements in continuum topology optimization, for example.

Assume the design variables are positive, i.e.~$x\in\rm_{>0}$. When we apply a load $p\in\rn$ to a structure $x$, the displacement vector $u\in\rn$ satisfies the equilibrium equation
\e{
K(x)u=p,
\label{equi}
}
where $K(x)\in\sn_{\succeq 0}$ is the global stiffness matrix and $n$ is the number of the degree of freedom of the nodal displacements. When $x\in\rm_{>0}$, $K(x)$ is positive definite and \eqref{equi} is solvable ($u$ uniquely exists). For a truss structure, the global stiffness matrix is a linear function ($K(x)=\sum_{j=1}^m x_j K_j$ with constant symmetric matrices called element stiffness matrices $K_j\in\sn_{\succeq 0}\ (j=1,\ldots,m)$) \cite{kanno15}. For a continuum structure, with the SIMP penalty method, we have $K(x)=\sum_{j=1}^m x_j^q K_j$ with typical penalty parameter $q=3$ \cite{bendsoe04}. We only assume the following for our analysis unless otherwise mentioned.

\begin{asm}
$K:\rm_{\ge0}\to\sn_{\succeq0}$ is continuous and $K(x)\succ0$ for any $x\in\rm_{>0}$.
\label{a_robust}
\end{asm}

The compliance of a structure $x\in\rm_{>0}$ and an applied load $p\in\rn$ is defined by the work done by the load:
\e{
p^\top u = p^\top K(x)^{-1}p.
\label{comp}
}

In robust compliance optimization, we consider the uncertain load $p=Q\hat{p}$ where $\hat{p}\in\rd$ is the uncertain vector satisfying $\|\hat{p}\|=1$, $d$ is the dimension of the uncertain set (generally $n\le d$), and a nonzero constant matrix $Q\in\mathbb{R}^{n\times d}$ is a weight matrix for the uncertainty load. The robust (or worst-case) compliance under the uncertain load \cite{kanno15,takezawa11} is defined for $x\in\rm_{>0}$ by 
\al{
\psi(x)
& \coloneq \underset{\|\hat{p}\|=1}{\max}\hat{p}^\top Q^\top K(x)^{-1}Q\hat{p}\\
& = \lambda_\mathrm{max}(Q^\top K(x)^{-1}Q).
\label{r_eig}
}
It is a nondifferentiable convex function. Note that it includes a standard compliance as a trivial case $d=1$.

The robust compliance can also be formulated as the maximum generalized eigenvalue \cite{cherkaev08,kanno15} for any $x\in\rm_{>0}$:
\e{
\psi(x) = \lambda_\mathrm{max}(QQ^\top,K(x)).
\label{r_gen}
}

The above two formulations are only valid for regular $K(x)$, e.g.~when $x\in\rm_{>0}$. In contrast, semidefinite constraint formulation \cite{bental97}
\e{
\inf\cur{\alpha\ge0\middle|\begin{pmatrix}
K(x) & Q \\
Q^\top & \alpha I \\
\end{pmatrix}
\succeq 0}
\label{r_sdp}
}
is well-defined for any $x\in\rm_{\ge0}$ and equivalent to \eqref{r_eig} and \eqref{r_gen} for any $x\in\rm_{>0}$. Equivalences are consequences of the Schur complement. See \cite{kanno15} for details.

We can also use the extended definition of the maximum generalized eigenvalue \eqref{dfn1} to define the robust compliance for $x\in\rm_{\ge0}$.

\subsection{Extended formulation}

\begin{figure}[t]
  \centering
  \begin{tabular}{cccc}
  \begin{minipage}[t]{0.23\hsize}
    \centering
    \includegraphics[width=1.6cm]{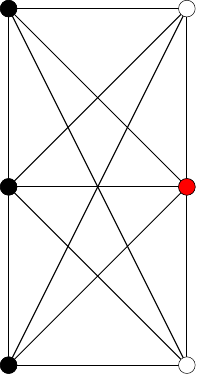}
    \subcaption{$K(x)$ is regular. $x_j>0$ for all $j\in\{1,\ldots,m\}$.}
  \end{minipage} &
  \hspace{-3mm}
  \begin{minipage}[t]{0.23\hsize}
    \centering
    \includegraphics[width=1.6cm]{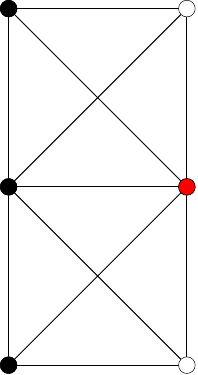}
    \subcaption{$K(x)$ is regular. $x_j=0$ for some $j\in\{1,\ldots,m\}$.}
  \end{minipage} &
  \hspace{-3mm}
  \begin{minipage}[t]{0.23\hsize}
    \centering
    \includegraphics[width=1.6cm]{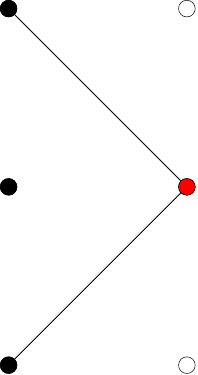}
    \subcaption{$K(x)$ is singular. $\image Q\subseteq\image K(x)$ ($\psi(x)<\infty$).}
  \end{minipage} &
  \hspace{-3mm}
  \begin{minipage}[t]{0.23\hsize}
    \centering
    \includegraphics[width=1.6cm]{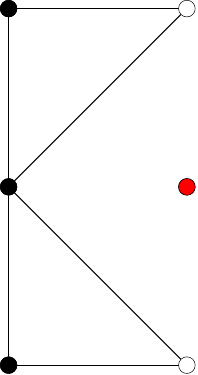}
    \subcaption{$K(x)$ is singular. $\image Q\nsubseteq\image K(x)$ ($\psi(x)=\infty$).}
  \end{minipage}
  \end{tabular}
  \caption{The design of a truss structure and its global stiffness matrix. The black nodes are fixed. An uncertain static load $p$ is applied to the red nodes. The white and red nodes are free to move.}
  \label{f_des}
\end{figure}

Using the extended definition \eqref{dfn1} of the maximum generalized eigenvalue, we redefine the robust compliance $\psi:\rm_{>0}\to\r$ as an extended real-valued function $\psi:\rm_{\ge0}\to(-\infty,\infty]$ by
\e{
\psi(x)\coloneq\lambda_\mathrm{max}(QQ^\top,K(x)),
\label{robust_obj}
}
where $\lambda_\mathrm{max}(\cdot,\cdot)$ is defined by \eqref{dfn1}. The definition \eqref{robust_obj} is consistent with the definition \eqref{r_eig} for any $x\in\rm_{>0}$. As shown in the following proposition, the above definition is equivalent to the definition in \cite{bental97} (i.e.~the second expression of \eqref{r2}).

\begin{prp}
$\psi$ defined by \eqref{robust_obj} has the following equivalent definitions:
\al{
\psi(x)
& = \inf\cur{\alpha\ge0\middle|\begin{pmatrix}
K(x) & Q \\
Q^\top & \alpha I \\
\end{pmatrix}
\succeq 0}\\
& = \underset{\|\hat{p}\|=1}{\sup}\underset{v\in\rn}{\sup}\cur{2\hat{p}^\top Q^\top v-v^\top K(x)v}\\
& =
\begin{cases}
\lambda_\mathrm{max}(Q^\top U)\ \mathrm{with}\ K(x)U=Q & (x\in\mathcal{K}_0),\\
\infty & (\mathrm{otherwise}),
\end{cases}
\label{r2}
}
where 
\al{
\mathcal{K}_0 
& \coloneq \{x\in\rm_{\ge 0}\mid \image Q\subseteq\image K(x)\}\\
& = \{x\in\rm_{\ge 0}\mid \image Q\perp\kernel K(x)\}
\label{robust_set}
}
is the subset of $\rm_{\ge 0}$ such that $K(x)u=Q\hat{p}$ has at least one solution for any $\|\hat{p}\|=1$.
\end{prp}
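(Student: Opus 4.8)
The plan is to start from $\psi(x)=\lambda_\mathrm{max}(QQ^\top,K(x))$ and rewrite it at once via Theorem~\ref{t_dfn}, which with $X=QQ^\top$ and $Y=K(x)$ yields $\psi(x)=\inf\{\alpha\ge0\mid\alpha K(x)-QQ^\top\succeq0\}$, with the infimum over the empty set read as $+\infty$; every other claimed expression will then be connected back to this one. As a preliminary I would dispose of the description of $\mathcal{K}_0$: since $K(x)$ is symmetric, $\image K(x)=(\kernel K(x))^\perp$, so the three conditions ``$\image Q\subseteq\image K(x)$'', ``$\image Q\perp\kernel K(x)$'', and ``$K(x)u=Q\hat p$ is solvable for every $\|\hat p\|=1$'' coincide.

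For the equality with the semidefinite program I would use the generalized Schur complement. When $\alpha>0$ the $(2,2)$ block $\alpha I$ is invertible, so the block matrix is positive semidefinite if and only if $K(x)-\tfrac1\alpha QQ^\top\succeq0$, that is, $\alpha K(x)-QQ^\top\succeq0$. When $\alpha=0$ the block matrix has a zero diagonal block while $Q\ne0$, hence is never positive semidefinite; and $\alpha=0$ is equally infeasible for $\alpha K(x)-QQ^\top\succeq0$, since that would force $QQ^\top\preceq0$. Thus the two feasible sets of $\alpha$ agree and so do the two infima.

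For the remaining two expressions I would study the quadratic form $q_\alpha(v,w):=v^\top K(x)v+2w^\top Q^\top v+\alpha\|w\|^2$ on $\rn\times\rd$, whose nonnegativity for all $(v,w)$ is precisely the block LMI. Fixing $w$ and minimizing the convex quadratic $v\mapsto v^\top K(x)v+2w^\top Q^\top v$: if $Qw\notin\image K(x)$ the infimum is $-\infty$ (take $v=tv_0$ with $v_0\in\kernel K(x)$, $v_0^\top Qw\ne0$, and let $|t|\to\infty$ with the right sign), while if $Qw\in\image K(x)$ the minimum equals $-w^\top Q^\top K(x)^{+}Qw$. Consequently $\sup_{v}\{2\hat p^\top Q^\top v-v^\top K(x)v\}$ equals $\hat p^\top Q^\top K(x)^{+}Q\hat p$ when $Q\hat p\in\image K(x)$ and $+\infty$ otherwise, so the double supremum over $\|\hat p\|=1$ is $+\infty$ unless $\image Q\subseteq\image K(x)$, i.e.\ unless $x\in\mathcal{K}_0$, and equals $\lambda_\mathrm{max}(Q^\top K(x)^{+}Q)$ when $x\in\mathcal{K}_0$ (a supremum of a Rayleigh quotient). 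When $x\in\mathcal{K}_0$ every column of $Q$ lies in $\image K(x)\perp\kernel K(x)$, so for any solution $U$ of $K(x)U=Q$ one has $Q^\top U=Q^\top K(x)^{+}Q$ regardless of the choice of $U$, which identifies the third (case-split) expression with the second. Finally, since $q_\alpha\ge0$ on $\rn\times\rd$ amounts to $\min_v q_\alpha(v,w)\ge0$ for every $w$, the same inner minimization shows this holds if and only if $x\in\mathcal{K}_0$ and $\alpha\|w\|^2\ge w^\top Q^\top K(x)^{+}Qw$ for all $w$, i.e.\ $\alpha\ge\lambda_\mathrm{max}(Q^\top K(x)^{+}Q)$; hence the feasible set of $\alpha$ for the LMI is empty when $x\notin\mathcal{K}_0$ and equals $[\lambda_\mathrm{max}(Q^\top K(x)^{+}Q),\infty)$ when $x\in\mathcal{K}_0$, closing the loop back to the SDP value and therefore to $\psi(x)$.

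The step I expect to be the main obstacle is the bookkeeping forced by singular $K(x)$: one must use the Moore--Penrose generalized Schur complement rather than the classical one, keep careful track of which $(v,w)$ drive $q_\alpha$ to $-\infty$ (this is exactly the mechanism producing the value $+\infty$ outside $\mathcal{K}_0$), and check that $Q^\top U$ is genuinely independent of the chosen solution $U$ of $K(x)U=Q$. Each of these is elementary, but together they are what make the stated identities hold verbatim on all of $\rm_{\ge0}$ and not merely on $\rm_{>0}$.
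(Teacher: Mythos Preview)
Your proposal is correct and follows essentially the same route as the paper: both start from Theorem~\ref{t_dfn} to get $\psi(x)=\inf\{\alpha\ge0\mid\alpha K(x)-QQ^\top\succeq0\}$, pass to the block LMI via the Schur complement, and then analyze the associated quadratic form on $\rn\times\rd$ to obtain the double supremum and the case-split expression. The only cosmetic differences are that you are more explicit about the $\alpha=0$ boundary case and that you route the singular case through the Moore--Penrose inverse $K(x)^{+}$ (and then verify $Q^\top U=Q^\top K(x)^{+}Q$), whereas the paper works directly with an arbitrary solution $U$ of $K(x)U=Q$ and the first-order optimality condition of the inner supremum.
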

\begin{proof}
We have
\al{
& \lambda_\mathrm{max}(QQ^\top,K(x))\\
& = \inf\{\alpha\ge0\mid\alpha K(x)-QQ^\top\succeq0\}\\
& = \inf\cur{\alpha\ge0\middle|\begin{pmatrix}
K(x) & Q \\
Q^\top & \alpha I \\
\end{pmatrix}
\succeq 0}\\
& = \inf\cur{\alpha\ge0\middle|(v^\top -\hat{p}^\top) 
\begin{pmatrix}
K(x) & Q \\
Q^\top & \alpha I
\end{pmatrix}
\begin{pmatrix}
v \\
-\hat{p}
\end{pmatrix}
\ge 0\ \ (\forall v\in\rn,\ \hat{p}\in\rd)}\\
& = \inf\cur{\alpha\ge0\mid \alpha\|\hat{p}\|^2 - 2v^\top Q\hat{p} + v^\top K(x)v\ge0\ \ (\forall v\in\rn,\ \|\hat{p}\|=1)}\\
& = \inf\cur{\alpha\ge0\mid \alpha\ge2v^\top Q\hat{p}-v^\top K(x)v\ \ (\forall v\in\rn,\ \|\hat{p}\|=1)}\\
& = \underset{\|\hat{p}\|=1}{\sup}\underset{v\in\rn}{\sup}\cur{2\hat{p}^\top Q^\top v-v^\top K(x)v}
}
where the second equality follows by the Schur complement (see \cite{kanno15}), the fourth equality follows by expanding the quadratic form and ignoring the case $\hat{p}=0$ where the inequality trivially holds, and the fifth equality follows by dividing the inequality by $\|\hat{p}\|^2\neq0$ and replace $\hat{p}/\|\hat{p}\|$ and $v/\|\hat{p}\|$ by $\hat{p}$ and $v$, respectively.

We show the last equality in \eqref{r2}. Suppose $x\in\mathcal{K}_0$. There exists $u\in\rn$ satisfying $K(x)u=Q\hat{p}$ for any $\hat{p}\in\rd$. Since $K(x)u=Q\hat{p}$ is a necessary and sufficient optimality condition for $\sup_{v\in\rd}\cur{2\hat{p}^\top Q^\top v-v^\top K(x)v}$, $u$ attains the supremum. Also, there exists $U\in\r^{n\times d}$ satisfying $K(x)U=Q$ and $u=U\hat{p}+w$ with $w\in\kernel K(x)$. Therefore, we obtain
\al{
\psi(x)
& = \underset{\|\hat{p}\|=1}{\sup}\cur{2\hat{p}^\top Q^\top u-u^\top K(x)u}\\
& = \underset{\|\hat{p}\|=1}{\sup}\cur{2\hat{p}^\top Q^\top (U\hat{p}+w)-(U\hat{p}+w)^\top K(x)(U\hat{p}+w)}\\
& = \underset{\|\hat{p}\|=1}{\sup}\hat{p}^\top Q^\top U\hat{p}\\
& = \lambda_\mathrm{max}(Q^\top U),
}
where the second equality follows by $K(x)u=Q\hat{p}$ and $\image Q\perp\kernel K(x)$. Note that the symmetry of $Q^\top U=U^\top K(x)U$ is used in the last equality.

Suppose $x\notin\mathcal{K}_0$. There exist $p^*$ ($\|p^*\|=1$) and $w\in\kernel K(x)$ such that $p^{*\top} Q^\top w\neq 0$. Since
\al{
\psi(x)
& \ge 2p^{*\top} Q^\top w-w^\top K(x)w\\
& = 2p^{*\top} Q^\top w
}
and the norm of $w$ is arbitrary, we obtain $\psi(x)=\infty$.
\end{proof}

\begin{prp}
Let $K:\rm_{\ge0}\to\sn_{\succeq0}$ satisfy Assumption \ref{a_robust}. The function $\psi:\rm\to(-\infty,\infty]$ defined by \eqref{robust_obj} is lower semicontinuous. Furthermore, if $K$ is affine, $\psi$ is convex.
\end{prp}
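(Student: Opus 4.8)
The plan is to treat the two assertions separately, reducing each to a general result already proved in Section 3 combined with the variational formula for $\psi$. For the lower semicontinuity, I would note that $\psi=\lambda_\mathrm{max}(QQ^\top,K(\cdot))$ is exactly the composite function of Corollary \ref{crl1} with $S=\rm_{\ge0}$ (which is closed), $A\equiv QQ^\top$ (a constant, hence continuous, positive semidefinite matrix since $Q$ is real), and $B=K$ (continuous and $\sn_{\succeq0}$-valued by Assumption \ref{a_robust}); Corollary \ref{crl1} then gives that $\psi$ is lower semicontinuous on $\rm_{\ge0}$. Equivalently and more directly, $x\mapsto(QQ^\top,K(x))$ is continuous from $\rm_{\ge0}$ into $\sn_{\succeq0}\times\sn_{\succeq0}$ and $\lambda_\mathrm{max}(\cdot,\cdot)$ is lower semicontinuous there by Theorem \ref{t_dfn}, so the composition is lower semicontinuous on $\rm_{\ge0}$. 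To pass from $\rm_{\ge0}$ to the whole space I would invoke the elementary fact that extending a lower semicontinuous function defined on a closed set by $+\infty$ on the complement preserves lower semicontinuity; equivalently, $\mathrm{epi}\,\psi$ coincides with its epigraph over $\rm_{\ge0}$, which is closed in $\rm_{\ge0}\times\r$ and hence, $\rm_{\ge0}$ being closed in $\rm$, closed in $\rm\times\r$.

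For the convexity under the additional hypothesis that $K$ is affine, the point to be careful about is that Theorem \ref{t_dfn} and Corollary \ref{crl1} only deliver quasiconvexity, which is strictly weaker than what is claimed, so the sublevel-set argument does not suffice. Instead I would use the variational representation established in the preceding proposition,
\[
\psi(x)=\sup_{\|\hat p\|=1}\sup_{v\in\rn}\bigl\{2\hat p^\top Q^\top v-v^\top K(x)v\bigr\}.
\]
Since $K$ is affine, for each fixed $\hat p$ and $v$ the map $x\mapsto v^\top K(x)v$ is affine, so $x\mapsto 2\hat p^\top Q^\top v-v^\top K(x)v$ is affine, in particular convex. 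A pointwise supremum of an arbitrary family of convex functions is convex, so $\psi$ is convex on $\rm_{\ge0}$; and since $\rm_{\ge0}$ is convex and $\psi\equiv+\infty$ off it, the $+\infty$-extension is convex on all of $\rm$ (its epigraph equals the convex epigraph over $\rm_{\ge0}$).

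I do not anticipate a serious obstacle: the proof is a short assembly of Theorem \ref{t_dfn}/Corollary \ref{crl1}, the representation \eqref{r2}, and two routine extension-by-$+\infty$ facts. The only place that genuinely needs attention is the convexity half, where one must resist reading the conclusion off the quasiconvexity statement and instead route through the supremum-of-affine-functions form \eqref{r2}; a minor recurring bookkeeping point is the passage between "$\psi$ on $\rm_{\ge0}$" and "$\psi$ on $\rm$", which preserves lower semicontinuity because $\rm_{\ge0}$ is closed and convexity because $\rm_{\ge0}$ is convex.
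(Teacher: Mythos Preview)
Your proposal is correct and follows essentially the same route as the paper: lower semicontinuity from Theorem \ref{t_dfn} (via the continuous composition $x\mapsto(QQ^\top,K(x))$), and convexity from the supremum-of-affine-functions representation in \eqref{r2} when $K$ is affine. Your treatment is in fact more careful than the paper's one-line proof, since you explicitly handle the $+\infty$-extension from $\rm_{\ge0}$ to $\rm$ and flag why the quasiconvexity from Corollary \ref{crl1} alone would not suffice for the convexity claim.
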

\begin{proof}
The lower semicontinuity follows by Theorem \ref{t_dfn}. The convexity follows by the fact that $\psi$ is a supremum of affine functions over a closed convex set $\rm_{\ge0}$ as shown in the second form of \eqref{r2}.
\end{proof}

Figure \ref{f_des} gives simple examples of the relationships between the global stiffness matrices and the corresponding truss designs.

In the semidefinite constraint formulation, for any $x\notin\mathcal{K}_0$, there does not exist $\alpha\in\r$ such that \eqref{r_sdp} is satisfied. This is consistent with $\psi(x)=\infty$ for any $x\notin\mathcal{K}_0$.

We consider a minimization problem of the robust compliance under the volume constraint:
\e{\ald{
& \underset{x\in\rm_{\ge0}}{\mathrm{minimize}} & & \psi(x)\\
& \mathrm{subject\ to} & & l^\top x\le V_0,
\label{p_robust0}
}}
where the constraint $l^\top x\le V_0$ is the volume (or mass) constraint with $V_0>0$ and $l$ being a constant vector with positive components. Note that the volume constraint is essential since $\psi(x)\ge\psi(x')$ for all $x'$ satisfying $x'_j\ge x_j\ (j=1,\ldots,m)$, and thus $\psi(x)$ has no minimizer on $\rm_{\ge0}$. This property is similar to the function $f(x)=1/x$.

The problem \eqref{p_robust0} is equivalent to the following semidefinite programming (SDP) problem:
\e{\ald{
& \underset{x\in\rm_{\ge0}}{\mathrm{minimize}} & & \alpha\\
& \mathrm{subject\ to} & & l^\top x\le V_0,\\
& & &
\begin{pmatrix}
K(x) & Q \\
Q^\top & \alpha I
\end{pmatrix}
\succeq 0.
\label{p_robust_sdp}
}}
Note that when $K$ is affine, \eqref{p_robust_sdp} is a linear SDP, otherwise, it is a nonlinear SDP (e.g.~topology optimization of continua).
It can be solved efficiently by the interior-point method for linear SDP when $K$ is affine and the problem size is not very large. However, when the problem is nonlinear or large-scale, first-order nonsmooth optimization algorithms for the eigenvalue optimization formulation can be more efficient \cite{ding23,nishioka23smao}.

\subsection{Approximations}

The objective function of \eqref{p_robust0} is an extended real-valued function and computationally hard to treat. In practice, we need to avoid the singularity of $K(x)$ on the feasible set to use solvers for linear equations or eigenvalue problems, and thus we consider approximated problems.

\subsubsection{Conventional approximation}

A conventional method to approximate the problem \eqref{p_robust0} is to set an artificial lower limit $\epsilon>0$ on the optimization variables \cite{bendsoe04}:

\e{\ald{
& \underset{x\in[\epsilon,\infty)^m}{\mathrm{minimize}} & & \lambda_\mathrm{max}(Q^\top K(x)^{-1}Q)\\
& \mathrm{subject\ to} & & l^\top x\le V_0.
\label{p_robust1}
}}

The problem \eqref{p_robust1} is a minimization problem of a real-valued function over a closed convex set. It can be easily solved by projected subgradient method \cite{beck17}, smoothing (accelerated) projected gradient method \cite{nishioka23orl}, etc.

When $K$ is affine (e.g.~truss structure), the conventional approximation has the theoretical guarantee as the following theorem implies.

\begin{thm}
Let $K$ satisfy Assumption \ref{a_robust} and be an affine function. Let $\{x^\epsilon\}$ be a sequence of optimal solutions of the approximated problems \eqref{p_robust1} for $\epsilon>0$. Then, $\lim_{\epsilon\to0}\psi(x^\epsilon)=\psi(x^*)$ and any limit point of $\{x^\epsilon\}$ is an optimal solution $x^*$ of the original problem \eqref{p_robust0}.
\label{t_robust1}
\end{thm}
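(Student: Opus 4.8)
The plan is to lift the approximate problems \eqref{p_robust1} and the original problem \eqref{p_robust0} to unconstrained minimizations on $\rm$ of extended-real-valued functions, prove that the approximate objectives epi-converge to the original one, and then invoke Proposition \ref{p_conv}. Define $g^\epsilon:\rm\to(-\infty,\infty]$ by $g^\epsilon(x)=\lambda_\mathrm{max}(Q^\top K(x)^{-1}Q)$ if $x\in[\epsilon,\infty)^m$ and $l^\top x\le V_0$, and $g^\epsilon(x)=\infty$ otherwise; define $g:\rm\to(-\infty,\infty]$ by $g(x)=\psi(x)$ if $x\in\rm_{\ge0}$ and $l^\top x\le V_0$, and $g(x)=\infty$ otherwise. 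Because $l$ has positive components, $\domain g^\epsilon$ and $\domain g$ all lie in the compact set $C\coloneq\cur{x\in\rm_{\ge0}\mid l^\top x\le V_0}$, so the boundedness hypothesis of Proposition \ref{p_conv} holds. For sufficiently small $\epsilon$, $g^\epsilon$ is proper, and it is lower semicontinuous since $x\mapsto\lambda_\mathrm{max}(Q^\top K(x)^{-1}Q)$ is continuous on the compact set $[\epsilon,\infty)^m\cap C\subset\rm_{>0}$ (Assumption \ref{a_robust}, continuity of matrix inversion on $\sn_{\succ0}$, and continuity of the maximum eigenvalue); $g$ is proper and lower semicontinuous by the preceding proposition (lower semicontinuity of $\psi$) and the closedness of the volume constraint. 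Here $g^\epsilon$ encodes \eqref{p_robust1} and $g$ encodes \eqref{p_robust0}, and minimizers of both exist because these are proper lower semicontinuous functions whose lower level sets are compact.

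The core step is to show that $g^{\epsilon_k}$ epi-converges to $g$ for an arbitrary sequence $\epsilon_k\downarrow0$, which I would do by verifying the two defining inequalities directly. Since shrinking the feasible set can only raise the function, $g^\epsilon\ge g$ pointwise, so for any $x^k\to x$ we get $\liminf_k g^{\epsilon_k}(x^k)\ge\liminf_k g(x^k)\ge g(x)$ from lower semicontinuity of $g$; this is the first epi-convergence inequality. For the recovery inequality, fix $x$; if $g(x)=\infty$ there is nothing to prove, so assume $x$ is feasible for \eqref{p_robust0} with $\psi(x)<\infty$. Pick any $\bar x\in\rm_{>0}$ with $l^\top\bar x\le V_0$ and set $x_t=(1-t)x+t\bar x$; then $(x_t)_j\ge t\bar x_j>0$ and $l^\top x_t\le V_0$, and convexity of $\psi$ (which holds precisely because $K$ is affine) gives $\psi(x_t)\le(1-t)\psi(x)+t\psi(\bar x)\to\psi(x)$ as $t\downarrow0$. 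Choosing $t_k\downarrow0$ with $t_k\min_j\bar x_j\ge\epsilon_k$ (possible since $\epsilon_k\to0$), the point $x^k=x_{t_k}$ has all components at least $\epsilon_k$ and is feasible for \eqref{p_robust1}, so $g^{\epsilon_k}(x^k)=\psi(x_{t_k})$, $\limsup_k g^{\epsilon_k}(x^k)\le\psi(x)=g(x)$, and $x^k\to x$. This recovery step is the main obstacle: a feasible $x$ may lie on the boundary of $\rm_{\ge0}$, where $K(x)$ is singular and $\psi$ is possibly infinite, yet it must still be reached by strictly positive feasible points along which $\psi$ does not overshoot its value at $x$; convexity makes this immediate, but for a general continuous, non-affine $K$ with genuinely discontinuous $\psi$ no such recovery sequence need exist, which is why the conventional lower-bound approximation carries no convergence guarantee there. (Alternatively, one may observe that $\cur{g^{\epsilon_k}}$ is nonincreasing with pointwise infimum equal to $\psi$ on $\rm_{>0}\cap C$ and $\infty$ elsewhere, whose lower semicontinuous envelope is $g$ by the same convexity-and-lower-semicontinuity argument, and invoke the nonincreasing counterpart of Proposition \ref{p_cond}.)

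Finally, Proposition \ref{p_conv} yields $\inf g^{\epsilon_k}\to\inf g$ and that every accumulation point of minimizers $x^{\epsilon_k}$ of $g^{\epsilon_k}$ minimizes $g$. Since the objective of \eqref{p_robust1} coincides with $\psi$ on $\rm_{>0}$ and $x^\epsilon\in[\epsilon,\infty)^m\subset\rm_{>0}$, optimality of $x^\epsilon$ gives $\psi(x^\epsilon)=\inf g^\epsilon$, while $\inf g=\psi(x^*)$; hence $\psi(x^{\epsilon_k})\to\psi(x^*)$ and any limit point of $\cur{x^{\epsilon_k}}$ solves \eqref{p_robust0}. As this holds along every sequence $\epsilon_k\downarrow0$ and $\cur{x^\epsilon}$ lies in the compact set $C$, a routine subsequence argument upgrades the conclusions to $\lim_{\epsilon\to0}\psi(x^\epsilon)=\psi(x^*)$ and to the stated property of the limit points of $\cur{x^\epsilon}$.
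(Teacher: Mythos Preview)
Your argument is correct, but it follows a different route from the paper's.  The paper does not use epi-convergence here at all: it studies the optimal value function $\gamma(\epsilon)=\min_{x\in[\epsilon,\infty)^m}\{\psi(x)\mid l^\top x\le V_0\}$, invokes a parametric-optimization lemma (convex objective plus linear constraints yield a convex value function), and then uses that a finite convex function on an interval is continuous to conclude $\gamma(\epsilon)\to\gamma(0)$; the claim about limit points then follows from lower semicontinuity of $\psi$.  You instead lift both problems to extended-real-valued functions $g^\epsilon,g$ on $\rm$, verify the two epi-convergence inequalities directly, and invoke Proposition~\ref{p_conv}.  Both arguments rest on the convexity of $\psi$ that comes from $K$ being affine: the paper uses it to make $\gamma$ convex, you use it to build the recovery sequence $x_t=(1-t)x+t\bar x$ along which $\psi$ stays below $(1-t)\psi(x)+t\psi(\bar x)$.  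Your approach has the advantage of staying entirely inside the variational-analysis framework the paper has already set up (no external lemma needed) and of making transparent exactly where the affine hypothesis enters and why the conventional approximation can fail for non-affine $K$; the paper's approach is noticeably shorter.
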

\begin{proof}
Consider an optimal value function $\gamma:\r\to\r$ defined by
\e{
\gamma(\epsilon) = \underset{x\in[\epsilon,\infty)^m}{\min}\{\psi(x)\mid l^\top x\le V_0\}.
}
Since $\psi(x)$ is convex and constraints are linear, by \cite[Lemma 3.58]{beck17}, $\gamma$ is convex. Since $\gamma$ is convex and finite on $\epsilon\in(-\infty,c]$ with some $c>0$ (we set $\gamma(a)=\gamma(0)$ for any $a<0$), it is continuous on $(-\infty,c]$ by \cite[Theorem 2.22]{beck17}. Thus, $\lim_{\epsilon\to0}\gamma(\epsilon)=\gamma(0)$, which means $\lim_{\epsilon\to0}\psi(x^\epsilon)=\psi(x^*)$ and any limit point of $\{x^\epsilon\}$ is an optimal solution of \eqref{p_robust0}.
\end{proof}

\subsubsection{Proposed approximation}

We introduce an approximation of $\psi$ defined on $\rm_{\ge0}$ as follows:
\al{
\psi^\epsilon(x)
& \coloneq \lambda_\mathrm{max}(QQ^\top,K(x)+\epsilon I)\\
& = \lambda_\mathrm{max}(Q^\top(K(x)+\epsilon I)^{-1}Q).
\label{r_approx}
}

\begin{thm}
Let $K$ satisfy Assumption \ref{a_robust}. The function $\psi^\epsilon$ defined by \eqref{r_approx} is continuous on $\rm_{\ge0}$ for any $\epsilon>0$ and epi-converges to $\psi$ defined by \eqref{robust_obj} when $\epsilon\to0$.
\label{t_robust2}
\end{thm}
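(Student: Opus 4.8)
The plan is to apply Corollary \ref{crl1} with $A(x) = QQ^\top$ (a constant, hence continuous, symmetric positive semidefinite matrix-valued function) and $B(x) = K(x)$, taking $S = \rm_{\ge0}$, which is closed. By Assumption \ref{a_robust}, $K$ is continuous on $\rm_{\ge0}$ and maps into $\sn_{\succeq0}$, so the hypotheses of Corollary \ref{crl1} are met. The corollary then gives directly that $\psi(x) = \lambda_\mathrm{max}(QQ^\top, K(x))$ is nonnegative and lower semi-continuous on $\rm_{\ge0}$, that the approximation $\psi^\epsilon(x) = \lambda_\mathrm{max}(QQ^\top, K(x) + \epsilon I)$ is continuous on $\rm_{\ge0}$ for each $\epsilon > 0$, and that $\psi^\epsilon$ epi-converges to $\psi$ as $\epsilon \to 0$. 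This is the entire content of the theorem, so the proof is essentially a one-line invocation of the corollary once the identification of $A$ and $B$ is made.

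The only point needing a remark is the second equality in \eqref{r_approx}, namely $\lambda_\mathrm{max}(QQ^\top, K(x)+\epsilon I) = \lambda_\mathrm{max}(Q^\top(K(x)+\epsilon I)^{-1}Q)$: this follows because $K(x)+\epsilon I \succ 0$ for all $x \in \rm_{\ge0}$ and $\epsilon > 0$, so the generalized eigenvalue is classical, and the identity $\lambda_\mathrm{max}(QQ^\top, M) = \lambda_\mathrm{max}(Q^\top M^{-1} Q)$ for $M \succ 0$ is the same Schur-complement / similarity argument already used for \eqref{r_eig} and \eqref{r_gen} in the regular case; it does not affect the epi-convergence statement, which is about the function $(X,Y)\mapsto\lambda_\mathrm{max}(X,Y+\epsilon I)$ regardless of which equivalent expression one writes for it.

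There is essentially no obstacle here: the heavy lifting — establishing the equivalent definition \eqref{dfn2}, proving pointwise monotone convergence $\lambda_\mathrm{max}(X, Y+\epsilon I) \to \lambda_\mathrm{max}(X,Y)$, and invoking Proposition \ref{p_cond} — was all done in Theorem \ref{t_epi} and Corollary \ref{crl1}. The proof I would write is simply: ``Apply Corollary \ref{crl1} with $A(\cdot) \equiv QQ^\top$ and $B(\cdot) = K(\cdot)$, noting that $\rm_{\ge0}$ is closed and $K$ is continuous by Assumption \ref{a_robust}. The second expression in \eqref{r_approx} follows from $K(x)+\epsilon I \succ 0$ and the Schur complement as in the derivation of \eqref{r_gen}.'' If one wanted to be slightly more self-contained, one could also recall that $\psi = \lambda_\mathrm{max}(QQ^\top, K(\cdot))$ matches the redefinition \eqref{robust_obj} by construction, so no further reconciliation between the two formulas for $\psi$ is needed.
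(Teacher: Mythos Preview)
Your proposal is correct and matches the paper's own proof exactly: the paper simply states ``A direct consequence of Corollary \ref{crl1}.'' Your additional remarks about identifying $A(\cdot)\equiv QQ^\top$, $B(\cdot)=K(\cdot)$, and the Schur-complement justification for the second expression in \eqref{r_approx} are accurate elaborations but not needed for the argument.
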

\begin{proof}
A direct consequence of Corollary \ref{crl1}
\end{proof}

Consider the following approximated problem:
\e{\ald{
& \underset{x\in\rm_{\ge0}}{\mathrm{minimize}} & & \psi^{\epsilon}(x)\\
& \mathrm{subject\ to} & & l^\top x\le V_0.
\label{p_robust2}
}}

\begin{crl}
Let $K$ satisfy Assumption \ref{a_robust}. Let $\{x^\epsilon\}$ be a sequence of optimal solutions of the approximated problems \eqref{p_robust2} for $\epsilon>0$. Then, $\lim_{\epsilon\to0}\psi^\epsilon(x^\epsilon)=\psi(x^*)$ and any limit point of $\{x^\epsilon\}$ is an optimal solution $x^*$ of the original problem \eqref{p_robust0}.
\end{crl}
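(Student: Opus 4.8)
The plan is to fold the volume constraint into the objective, recast \eqref{p_robust0} and \eqref{p_robust2} as unconstrained minimizations over $\rm$ of extended-real-valued functions, and then apply Proposition \ref{p_conv}. Let $C\coloneq\{x\in\rm_{\ge0}\mid l^\top x\le V_0\}$ and let $\delta_C:\rm\to\{0,\infty\}$ be its indicator (zero on $C$, $+\infty$ off $C$). Since $l$ has positive components and $x\ge0$, every $x\in C$ obeys $x_j\le V_0/l_j$, so $C$ is compact and convex. Put $\tilde\psi^\epsilon\coloneq\psi^\epsilon+\delta_C$ and $\tilde\psi\coloneq\psi+\delta_C$, so that \eqref{p_robust2} is $\min_{x\in\rm}\tilde\psi^\epsilon(x)$ and \eqref{p_robust0} is $\min_{x\in\rm}\tilde\psi(x)$. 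Because $V_0>0$, $C$ meets $\rm_{>0}$, where $K$ is positive definite and hence $\psi$ is finite (Assumption \ref{a_robust}), so $\tilde\psi$ is proper; as $\psi^\epsilon$ is continuous on $\rm_{\ge0}$ (Theorem \ref{t_robust2}), $\psi$ is lower semicontinuous (via Theorem \ref{t_dfn}), and $C$ is closed, both $\tilde\psi^\epsilon$ and $\tilde\psi$ are proper and lower semicontinuous with effective domains contained in the bounded set $C$. In particular the infima are attained, so an optimizer $x^*$ of \eqref{p_robust0} exists, the optimizers $x^\epsilon$ of \eqref{p_robust2} exist, and $\{x^\epsilon\}\subseteq C$ has limit points.

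Next I would show $\tilde\psi^\epsilon$ epi-converges to $\tilde\psi$ as $\epsilon\to0$ by rerunning the monotone argument from the proof of Theorem \ref{t_epi}. For $0<\epsilon\le\epsilon'$ one has, from \eqref{dfn2}, that $\alpha(K(x)+\epsilon I)-QQ^\top\succeq0$ forces $\alpha(K(x)+\epsilon' I)-QQ^\top\succeq0$, hence $\psi^\epsilon(x)\ge\psi^{\epsilon'}(x)$ for every $x$; so along any sequence $\epsilon_k\downarrow0$ the functions $\tilde\psi^{\epsilon_k}=\psi^{\epsilon_k}+\delta_C$ are nondecreasing in $k$. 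By Proposition \ref{p_cond} they epi-converge to $\sup_k\mathrm{cl}\,\tilde\psi^{\epsilon_k}$. Each $\tilde\psi^{\epsilon_k}$ is already lower semicontinuous (continuous plus indicator of a closed set), so the closures are redundant, and $\sup_k\tilde\psi^{\epsilon_k}=(\sup_k\psi^{\epsilon_k})+\delta_C=(\lim_{\epsilon\to0}\psi^\epsilon)+\delta_C=\psi+\delta_C=\tilde\psi$, where $\lim_{\epsilon\to0}\psi^\epsilon=\psi$ pointwise is exactly the content of Theorem \ref{t_epi} (equivalently Corollary \ref{crl1} with $A=QQ^\top$, $B=K$). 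Thus $\tilde\psi^{\epsilon_k}$ epi-converges to $\tilde\psi$.

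Finally I would invoke Proposition \ref{p_conv} with $f^k=\tilde\psi^{\epsilon_k}$ and $f=\tilde\psi$: the hypotheses (proper, lower semicontinuous, bounded effective domains, epi-convergence) are in place, so $\lim_k\inf_x\tilde\psi^{\epsilon_k}(x)=\inf_x\tilde\psi(x)=\psi(x^*)$ and every accumulation point of $\{x^{\epsilon_k}\}$ with $x^{\epsilon_k}\in\arg\min\tilde\psi^{\epsilon_k}$ lies in $\arg\min\tilde\psi$, i.e.\ solves \eqref{p_robust0}. To upgrade the sequential statement to the continuous limit $\epsilon\to0$, I would use the monotonicity once more: $\gamma(\epsilon)\coloneq\inf_x\tilde\psi^\epsilon(x)$ is nonincreasing in $\epsilon$, the monotone-sequence case gives $\gamma(\epsilon)\uparrow\psi(x^*)$, and since $\epsilon_k\to0$ (not necessarily monotonically) eventually dips below any threshold, $\gamma(\epsilon_k)\to\psi(x^*)$ for every such sequence; a parallel squeezing argument with the continuity of each $\psi^{\epsilon_j}$ shows every limit point $\bar x$ of $\{x^{\epsilon_k}\}$ satisfies $\psi(\bar x)\le\psi(x^*)$, hence is optimal.

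The one genuinely delicate point is precisely this last reduction: one cannot simply quote Theorem \ref{t_robust2} and ``add the indicator,'' since epi-convergence is not in general preserved under adding a lower semicontinuous function — it is the monotonicity of $\epsilon\mapsto\psi^\epsilon$ that keeps Proposition \ref{p_cond} applicable after the reformulation, and that also bridges from sequences $\epsilon_k\downarrow0$ to the continuous parameter $\epsilon\to0$. Everything else (compactness of $C$, properness, lower semicontinuity) is routine.
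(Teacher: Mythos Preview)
Your argument is correct and follows the same route as the paper's one-line proof---compactness of the feasible set, epi-convergence (Theorem \ref{t_robust2}), and Proposition \ref{p_conv}. The paper does not spell out the point you carefully handle, namely that epi-convergence of $\psi^\epsilon$ to $\psi$ must be upgraded to epi-convergence of $\psi^\epsilon+\delta_C$ to $\psi+\delta_C$ (which you recover via the monotonicity and Proposition \ref{p_cond}, exactly as in the proof of Corollary \ref{crl1} with $S=C$); so your version is essentially a fleshed-out form of the paper's argument.
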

\begin{proof}
It immediately follows from the compactness of the feasible set of \eqref{p_robust0}, Proposition \ref{p_conv}, and Theorem \ref{t_robust2}.
\end{proof}

One advantage of the proposed approximated problem \eqref{p_robust2} over the conventional approximated problem \eqref{p_robust1} is that a minimizer can have bars with 0 cross-sectional areas in \eqref{p_robust2}, and thus it is actual topology optimization rather than size optimization, which does not admit a topological change of a solution from the initial design. We show that the proposed approximation can produce a solution with many variables equal to zero by numerical experiments. Moreover, the conventional approach fails to be an appropriate approximated problem in eigenfrequency optimization as suggested in the next section.

\section{Eigenfrequency optimization}

In this section, we consider a maximization problem of the minimum eigenfrequency in topology optimization. It is formulated as a minimization problem of the maximum generalized eigenvalue. However, the maximum generalized eigenvalue can be discontinuous unlike the objective function of the robust compliance optimization, and thus the conventional approximation with the lower bound fails to be an accurate approximation. In contrast, the proposed approximation still has a theoretical guarantee of the convergence of solutions.

\subsection{Existing formulation}

For any $x\in\rm_{>0}$, the minimum (fundamental) eigenfrequency of the structure is a positive square root\footnote{We often identify eigenfrequencies and generalized eigenvalues, rather than the positive square root of it, for simplicity.} of the minimum generalized eigenvalue $\lambda_\mathrm{min}(K(x),M(x))$ where $M(x)$ is the so-called mass matrix, which is also symmetric positive definite for any $x\in\rm_{>0}$ in many cases. The stiffness matrix $K(x)$ is the same as the one in the previous section. There is an important relationship $\kernel M(x)\subseteq\kernel K(x)$ in many cases \cite{achtziger07siam}.

When we consider a truss structure or a continuum with the density method \cite{bendsoe04}, the mass matrix is an affine function $M(x)=M_0+\sum_{j=1}^m x_j M_j$ with element mass matrices $M_j\in\sd_{\succeq 0}\ (j=0,\ldots,m)$ ($M_0$ corresponds to a non-structural mass). Note that we denote by $M(x)$ a mass matrix that includes a non-structural mass $M_0$ (possibly $M_0=0$) unlike \cite{achtziger07siam,nishioka23coap}. We only assume the following for our analysis unless otherwise mentioned.

\begin{asm}
$K,M:\rm_{\ge0}\to\sn_{\succeq0}$ are continuous and $K(x),M(x)\succ0$ for any $x\in\rm_{>0}$.
\label{a_freq}
\end{asm}

Since generalized eigenvalues are not well-defined for positive semidefinite (singular) matrices as mentioned in Section 2.2, Achtziger and Ko\v{c}vara \cite{achtziger07siam} introduce the extended formulation defined by \eqref{ac} and consider a maximization problem of the minimum generalized eigenvalue, which can be solved by nonlinear semidefinite programming
\e{
\ald{
& \underset{x\in\rm_{\ge0},\lambda\in\r}{\mathrm{minimize}} & & \lambda\\
& \mathrm{subject\ to} & & \lambda K(x)-M(x)\succeq 0,\\
& & & l^\top x\le V_0
}
}
or the bisection method with semidefinite programming (see \cite{achtziger07siam,nishioka23coap}). In this paper, we investigate solution methods by first-order optimization algorithms to treat possibly large-scale problems.

\subsection{Extended formulation}

Instead of a maximization problem of the minimum generalized eigenvalue, we consider a minimization problem of the maximum one with two matrices switched to make a comparison with robust compliance. The equivalence is proved in Appendix A. We consider the maximum generalized eigenvalue function $\varphi:\rm_{\ge0}\to(-\infty,\infty]$ defined by
\e{
\varphi(x)\coloneq\lambda_\mathrm{max}(M(x),K(x)),
\label{varphi0}
}
where $\lambda_\mathrm{max}(\cdot,\cdot)$ is defined by \eqref{dfn1}. Compared to the domain $\rm_{\ge0}\backslash\{0\}$ of the minimum generalized eigenvalue in \cite{achtziger07siam}, the domain is extended to $\rm_{\ge0}$.

Since $K(0)=0$ and $M(0)=M_0$, the definition \eqref{dfn1} results in
\e{
\varphi(0)=
\begin{cases}
\infty & (M_0\neq0)\\
0 & (M_0=0)
\end{cases}.
}
With a non-structural mass ($M_0\neq0$), it is natural that no structure ($x=0$) leads to a bad objective value, and $x=0$ is never an optimal solution. On the other hand, without a non-structural mass ($M_0=0$), we have $\varphi(0)=0$. Since $\varphi(x)\ge0$ for any $x$, $x=0$ becomes an optimal solution of the minimization of $\varphi(x)$ over $\rm_{\ge0}$. We can interpret this as follows; when there exists neither non-structural mass nor structure itself, vibration never occurs, and thus it can be considered to be optimal. The maximum of the empty set is normally defined as $-\infty$, but we naturally have $\varphi(x)\ge0$, and thus we have the maximum generalized eigenvalue $\varphi(0)=0$.

To exclude the meaningless solution $x=0$, we consider the problem with the equality volume constraint:
\e{\ald{
& \underset{x\in\rm_{\ge0}}{\mathrm{minimize}} & & \varphi(x)\\
& \mathrm{subject\ to} & & l^\top x= V_0.
\label{p_freq0}
}}
This is justified by the following proposition.

\begin{prp}
An optimal solution of the problem \eqref{p_freq0} exists and is also an optimal solution of the following problem: 
\e{\ald{
& \underset{x\in\rm_{\ge0}\backslash\{0\}}{\mathrm{minimize}} & & \varphi(x)\\
& \mathrm{subject\ to} & & l^\top x\le V_0.
}}
\end{prp}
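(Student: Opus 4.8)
I would prove this in two stages: first that \eqref{p_freq0} has a minimizer, and then that any such minimizer is feasible and optimal for the inequality-constrained problem. The second stage reduces, after comparing the two feasible sets, to a single monotonicity fact — that $\varphi$ does not increase when the design variable is scaled up toward the boundary of the volume constraint — and that is the step that genuinely uses the structure of $K$ and $M$.

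\emph{Existence.} The feasible set $F_1\coloneq\{x\in\rm_{\ge0}\mid l^\top x=V_0\}$ is closed and, since $l$ has positive entries and $V_0>0$, bounded, hence compact; it is nonempty, containing the strictly positive point $\bar x\coloneq(V_0/\sum_j l_j)(1,\dots,1)^\top$. By Corollary \ref{crl1} (with $A=M$, $B=K$, $S=\rm_{\ge0}$), $\varphi=\lambda_\mathrm{max}(M(\cdot),K(\cdot))$ is nonnegative and lower semi-continuous on $\rm_{\ge0}$. It is not identically $+\infty$ on $F_1$: since $K(\bar x)\succ0$ by Assumption \ref{a_freq}, one has $\alpha K(\bar x)-M(\bar x)\succeq0$ for all sufficiently large $\alpha\ge0$, so $\varphi(\bar x)<\infty$ by the characterization \eqref{dfn2}. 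A lower semi-continuous, nonnegative function that is somewhere finite attains its infimum on a nonempty compact set, so \eqref{p_freq0} has an optimal solution $x^*\in F_1$, with $\varphi(x^*)=\inf_{x\in F_1}\varphi(x)\in[0,\infty)$.

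\emph{Reduction.} Let $F_2\coloneq\{x\in\rm_{\ge0}\setminus\{0\}\mid l^\top x\le V_0\}$ be the feasible set of the second problem. Every $x\in F_1$ has $l^\top x=V_0>0$, hence $l^\top x\le V_0$ and $x\neq0$; so $F_1\subseteq F_2$, which gives $x^*\in F_2$ and $\inf_{x\in F_2}\varphi(x)\le\varphi(x^*)$. It remains to show $\varphi(x)\ge\varphi(x^*)$ for each $x\in F_2$. For such an $x$, the facts $x\ge0$, $x\neq0$ and positivity of the entries of $l$ give $l^\top x>0$, so $t\coloneq V_0/(l^\top x)\ge1$ and $tx\in F_1$. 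If $\varphi(tx)\le\varphi(x)$, then $\varphi(x)\ge\varphi(tx)\ge\varphi(x^*)$ since $tx\in F_1$ and $x^*$ minimizes $\varphi$ over $F_1$; combined with the previous inequality this gives $\inf_{x\in F_2}\varphi(x)=\varphi(x^*)$, attained at $x^*\in F_2$, i.e.\ the assertion.

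\emph{The scaling estimate (the main obstacle).} What is left is the claim $\varphi(tx)\le\varphi(x)$ for all $x\in\rm_{\ge0}$ and $t\ge1$. This is where one must invoke the concrete form of the matrices recalled at the start of the section, not merely Assumption \ref{a_freq}: $M$ is affine with positive semidefinite non-structural term $M_0$, so $M(tx)=M_0+t\sum_j x_jM_j=tM(x)-(t-1)M_0\preceq tM(x)$ for $t\ge1$; and $K$ is positively homogeneous of degree $r\ge1$ ($r=1$ for trusses, $r=q$ for the SIMP model), so $K(tx)=t^rK(x)\succeq tK(x)$ for $t\ge1$, using $K(x)\succeq0$. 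Given these, if $\alpha\ge0$ satisfies $\alpha K(x)-M(x)\succeq0$ then $\alpha K(tx)\succeq\alpha tK(x)\succeq tM(x)\succeq M(tx)$, so $\alpha$ is admissible in the infimum \eqref{dfn2} defining $\varphi(tx)$; taking the infimum over all such $\alpha$ yields $\varphi(tx)\le\varphi(x)$, and the inequality is trivial when $\varphi(x)=\infty$. The algebra here is short, but this is the point that requires care: without structure beyond Assumption \ref{a_freq} (for instance, with a merely continuous mass matrix) the monotonicity can break, so the proof has to be explicit about using the affine mass matrix and the homogeneous stiffness matrix; everything else is compactness and lower semi-continuity.
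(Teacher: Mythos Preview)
Your proof is correct and follows the same approach as the paper: existence via compactness and lower semi-continuity, then the scaling inequality $\varphi(tx)\le\varphi(x)$ for $t\ge1$ to pull any feasible point of the inequality-constrained problem onto the equality constraint. Your argument is slightly more general (it covers the SIMP homogeneity degree $r\ge1$ and works directly with the infimum characterization \eqref{dfn2} rather than the Rayleigh quotient), and you are right to flag that this step requires more structure than Assumption~\ref{a_freq} alone --- the paper's proof tacitly uses the linear/affine forms of $K$ and $M$.
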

\begin{proof}
The existence of an optimal solution of \eqref{p_freq0} is trivial from the lower semi-continuity of $\varphi$ (proved later in Proposition \ref{lsc}) and the compactness of the feasible set. We have
\al{
\frac{v^\top M(\alpha x)v}{v^\top K(\alpha x)v}
& = \frac{v^\top (M(x)+(1/\alpha-1)M_0)v}{v^\top K(x)v}\\
& \le \frac{v^\top M(x)v}{v^\top K(x)v}
}
for any $\alpha>1$, $x\neq0$, and $v\neq0$. We define $\bar{x}=V_0x/(l^\top x)$ for any $x\in\rm_{\ge0}\backslash\{0\}$ with $l^\top x\le V_0$, and then $\bar{x}$ satisfies $\varphi(x)\ge\varphi(\bar{x})$ and is a feasible point of \eqref{p_freq0}. Therefore, an optimal solution $x^*$ of \eqref{p_freq0} satisfies $\varphi(x)\ge\varphi(x^*)$ for any $x\in\rm_{\ge0}\backslash\{0\}$ with $l^\top x\le V_0$.
\end{proof}

\begin{prp}
Let $K,M$ satisfy Assumption \ref{a_freq}. The function $\varphi$ defined by \eqref{varphi0} is lower semi-continuous on $\rm_{\ge0}$ and continuous on $\rm_{>0}$. Furthermore, when $K,M$ are affine, $\varphi$ is quasiconvex on $\rm_{\ge0}$.
\label{lsc}
\end{prp}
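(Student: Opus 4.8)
The plan is to obtain all three assertions from results already established for the abstract generalized eigenvalue function, applying them to the composite $\varphi(x)=\lambda_{\max}(M(x),K(x))$ with the matrix-valued maps $A=M$ and $B=K$. Since $M,K:\rm_{\ge0}\to\sn_{\succeq0}$ are continuous by Assumption~\ref{a_freq}, and $\rm_{\ge0}$ is closed, Corollary~\ref{crl1} applies directly with $S=\rm_{\ge0}$: it gives that $\lambda_{\max}(M(\cdot),K(\cdot))$ is nonnegative and lower semi-continuous on $\rm_{\ge0}$, which is the first assertion. For the affine case, Corollary~\ref{crl1} also yields quasiconvexity of $\lambda_{\max}(A(\cdot),B(\cdot))$ on $\rm_{\ge0}$ whenever $A,B$ are affine, so the third assertion follows immediately once we note that $M$ and $K$ are affine under the stated hypothesis (this is exactly the truss/density-method setting recalled just before Assumption~\ref{a_freq}).

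The only assertion not handed to us verbatim by Corollary~\ref{crl1} is continuity on the \emph{open} set $\rm_{>0}$. For this I would argue as follows. For $x\in\rm_{>0}$ we have $K(x)\succ0$, so $\kernel K(x)=\{0\}\subseteq\kernel M(x)$; hence we are in the regular case and $\varphi(x)=\lambda_{\max}(M(x),K(x))=\lambda_{\max}\bigl(K(x)^{-1/2}M(x)K(x)^{-1/2}\bigr)<\infty$ by the reduction to a standard eigenvalue problem recalled in Section~2.2. On $\rm_{>0}$ the map $x\mapsto K(x)$ is continuous with values in $\sn_{\succ0}$, the matrix inverse and positive square root are continuous on $\sn_{\succ0}$, $M$ is continuous, and $\lambda_{\max}(\cdot)$ (the largest standard eigenvalue) is continuous on $\sn$; composing these continuous maps shows $\varphi$ is continuous at every $x\in\rm_{>0}$. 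This is really the same argument already invoked for continuity of $\lambda_{\max}(\cdot,\cdot)$ on $\sn_{\succ0}\times\sn_{\succ0}$ and of $\lambda_{\max}(\cdot,\cdot+\epsilon I)$ in Section~2.2 and in the proof of Theorem~\ref{t_epi}, now transported through the continuous maps $K,M$.

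I do not expect a genuine obstacle here; the statement is essentially a repackaging of Corollary~\ref{crl1} plus the elementary observation that on $\rm_{>0}$ the denominator matrix is uniformly positive definite so the Rayleigh-quotient characterization coincides with a continuous standard-eigenvalue expression. The one point to state carefully is why continuity \emph{fails} to extend to the boundary in general — namely that $K(x)$ may become singular while $M(x)$ does not, forcing $\varphi(x)=\infty$ with a jump — but that is not needed for the proof of the proposition; it merely explains why the continuity claim is restricted to $\rm_{>0}$. If one wanted to avoid citing Corollary~\ref{crl1} and argue lower semi-continuity from scratch, one would instead use the sublevel-set description $\{x\mid\varphi(x)\le\alpha\}=\{x\in\rm_{\ge0}\mid \alpha K(x)-M(x)\succeq0\}$ from Theorem~\ref{t_dfn}, which is closed because $K,M$ are continuous and the PSD cone is closed, and convex when $K,M$ are affine because $\alpha K(x)-M(x)$ is then affine in $x$ and the PSD cone is convex; but routing through Corollary~\ref{crl1} is shorter and is the natural choice given the structure of the paper.
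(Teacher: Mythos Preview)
Your proposal is correct and follows essentially the same route as the paper, which simply writes ``A direct consequence of Theorem~\ref{t_dfn}.'' You route through Corollary~\ref{crl1} (the composite version of Theorem~\ref{t_dfn}) and add an explicit argument for continuity on $\rm_{>0}$ via the reduction to a standard eigenvalue problem from Section~2.2; this is exactly the content the paper's one-line proof is implicitly invoking, so your version is a faithful (and slightly more detailed) unpacking of the same argument.
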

\begin{proof}
A direct consequence of Theorem \ref{t_dfn}.
\end{proof}

\subsection{Approximations}

Since $\varphi$ may be discontinuous on the boundary of $\rm_{\ge0}$, there is no guarantee that a solution of an approximated problem with an artificial lower bound
\e{\ald{
& \underset{x\in[\epsilon,\infty)^m}{\mathrm{minimize}} & & \varphi(x)\\
& \mathrm{subject\ to} & & l^\top x= V_0
\label{p_freq1}
}}
converges to a solution of the original problem \eqref{p_freq0} (see examples in the next subsection). This is a main difference between the robust compliance optimization and the eigenfrequency optimization. In contrast, the proposed approximation still works in the eigenfrequency optimization.

We introduce the following continuous approximation of $\varphi$ defined on $\rm_{\ge0}$:
\e{
\varphi^\epsilon(x)\coloneq\lambda_\mathrm{max}(M(x),K(x)+\epsilon I)
\label{freq_approx}
}

\begin{rmk}
We give an intuitive description of why the approximation \eqref{freq_approx} works well. When $v\in\kernel M(x)$, the Rayleigh quotient $(v^\top M(x)v)/(v^\top K(x)v)$ is undefined ($0/0$) since $\kernel M(x)\subseteq\kernel K(x)$. Therefore, we want to eliminate $v\in\kernel M(x)$ for the candidates of eigenvectors corresponding to the maximum generalized eigenvalue. By adding $\epsilon I$, $(v^\top M(x)v)/(v^\top (K(x)+\epsilon I)v)$ becomes zero for any $v\in\kernel M(x)$, and thus $v\in\kernel M(x)$ is automatically eliminated.
\end{rmk}

\begin{thm}
The function $\varphi^\epsilon$ defined by \eqref{freq_approx} is continuous on $\rm_{\ge0}$ for any $\epsilon>0$ and epi-converges to $\varphi$ defined by \eqref{varphi0} as $\epsilon\to0$.
\label{t_freq}
\end{thm}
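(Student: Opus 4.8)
The plan is to recognize Theorem \ref{t_freq} as the special case of Corollary \ref{crl1} with $S=\rm_{\ge0}$, $A=M$, and $B=K$. The set $\rm_{\ge0}$ is closed, and under Assumption \ref{a_freq} the matrix-valued maps $M,K:\rm_{\ge0}\to\sn_{\succeq0}$ are continuous, so both hypotheses of Corollary \ref{crl1} hold. The corollary then yields directly that $\varphi^\epsilon=\lambda_\mathrm{max}(M(\cdot),K(\cdot)+\epsilon I):\rm_{\ge0}\to\r$ is continuous for every $\epsilon>0$ and epi-converges to $\varphi=\lambda_\mathrm{max}(M(\cdot),K(\cdot))$ as $\epsilon\to0$. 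At the level of invoking earlier results, the proof is a single line, just as for Theorem \ref{t_robust2}.

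Should one prefer to spell the argument out, I would mimic the proof of Theorem \ref{t_epi} with $X,Y$ replaced throughout by $M(x),K(x)$. Continuity of $\varphi^\epsilon$: for every $x\in\rm_{\ge0}$ we have $K(x)+\epsilon I\succeq\epsilon I\succ0$, so $\varphi^\epsilon(x)=\lambda_\mathrm{max}\big((K(x)+\epsilon I)^{-1/2}M(x)(K(x)+\epsilon I)^{-1/2}\big)$, a composition of continuous operations ($x\mapsto K(x)+\epsilon I$, inversion and positive square root on $\sn_{\succ0}$, matrix products, and $\lambda_\mathrm{max}(\cdot)$), hence continuous on $\rm_{\ge0}$. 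Monotonicity in $\epsilon$: from the characterization \eqref{dfn2}, for $0<\epsilon\le\epsilon'$ the implication $\alpha(K(x)+\epsilon I)-M(x)\succeq0\implies\alpha(K(x)+\epsilon' I)-M(x)\succeq0$ shows $\varphi^\epsilon(x)\ge\varphi^{\epsilon'}(x)$ for all $x$, so $\{\varphi^\epsilon\}$ is nondecreasing as $\epsilon\downarrow0$. By Proposition \ref{p_cond} it epi-converges to $\sup_{\epsilon>0}\mathrm{cl}\,\varphi^\epsilon$; since each $\varphi^\epsilon$ is continuous on the closed set $\rm_{\ge0}$ it equals its own closure, and monotonicity makes this supremum the pointwise limit $x\mapsto\lim_{\epsilon\to0}\varphi^\epsilon(x)$. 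Finally, applying the pointwise convergence established inside Theorem \ref{t_epi} to the fixed pair $X=M(x)$, $Y=K(x)\in\sn_{\succeq0}$ gives $\lim_{\epsilon\to0}\varphi^\epsilon(x)=\lambda_\mathrm{max}(M(x),K(x))=\varphi(x)$ for every $x\in\rm_{\ge0}$, which is the asserted epi-limit.

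There is no substantive obstacle; the statement is a corollary of machinery already in place. The only points requiring care are bookkeeping ones: matching the roles of the arguments (here $M$ occupies the first slot and $\epsilon I$ is added to $K$ in the second slot, exactly the slot treated in Corollary \ref{crl1}), and observing that since $\rm_{\ge0}$ is closed the epi-limit is $\varphi$ itself and not its lower semicontinuous envelope---which is consistent anyway, $\varphi$ being lower semicontinuous by Proposition \ref{lsc}. The possible discontinuity of $\varphi$ (for instance $\varphi(0)=\infty$ when $M_0\neq0$, since then $\kernel K(0)=\rn\not\subseteq\kernel M(0)$) is not an impediment, epi-convergence being designed precisely to accommodate such behavior.
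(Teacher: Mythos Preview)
Your proposal is correct and follows exactly the paper's approach: the paper's proof is the single line ``A direct consequence of Corollary \ref{crl1},'' and you have identified precisely this specialization (with $S=\rm_{\ge0}$, $A=M$, $B=K$) together with a faithful unpacking of the argument behind it.
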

\begin{proof}
A direct consequence of Corollary \ref{crl1}.
\end{proof}

We consider the approximated problem of \eqref{p_freq0}:
\e{
\ald{
& \underset{x\in\rm_{\ge0}}{\mathrm{minimize}} & & \varphi^\epsilon(x)\\
& \mathrm{subject\ to} & & l^\top x= V_0,
\label{p_freq2}
}}
with some small $\epsilon>0$.

\begin{crl}
Let $\{x^\epsilon\}$ be a sequence of optimal solutions of the approximated problems \eqref{p_freq2} for $\epsilon>0$. Then, $\lim_{\epsilon\to0}\psi^\epsilon(x^\epsilon)=\psi(x^*)$ and any limit point of $\{x^\epsilon\}$ is an optimal solution $x^*$ of the original problem \eqref{p_freq0}.
\end{crl}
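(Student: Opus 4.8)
The plan is to mirror the proof of the analogous corollary for robust compliance: combine the epi-convergence established in Theorem~\ref{t_freq} with the minimization-convergence result of Proposition~\ref{p_conv}, after folding the volume constraint into the objective. Concretely, I would introduce the closed set $C\coloneq\{x\in\rm_{\ge0}\mid l^\top x=V_0\}$ and its indicator $\delta_C$ (equal to $0$ on $C$ and $+\infty$ elsewhere), and work with the functions $\varphi^\epsilon+\delta_C$ and $\varphi+\delta_C$ on $\rm$ (the $\psi^\epsilon,\psi$ in the statement should of course read $\varphi^\epsilon,\varphi$). Since $l$ has positive components and $V_0>0$, the set $C$ is compact, so both families have effective domains contained in $C$, hence bounded; moreover $C\cap\rm_{>0}\neq\emptyset$ and $\varphi$ is finite there, so $\varphi+\delta_C$ is proper, while $\varphi^\epsilon+\delta_C$ is proper because $\varphi^\epsilon$ is real-valued.

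Next I would verify lower semicontinuity and epi-convergence of the augmented functions. By Proposition~\ref{lsc}, $\varphi$ is lsc on $\rm_{\ge0}$, hence (extended by $+\infty$) lsc on $\rm$, and since $C$ is closed, $\varphi+\delta_C$ is lsc; each $\varphi^\epsilon$ is continuous on $\rm_{\ge0}$ by Theorem~\ref{t_freq}, so $\varphi^\epsilon+\delta_C$ is lsc as well. For the epi-convergence I would re-run the monotone argument behind Theorem~\ref{t_freq}: the family $\{\varphi^\epsilon\}$ is nondecreasing as $\epsilon\downarrow0$ (this is the monotonicity in the proof of Theorem~\ref{t_epi}, carried over by Corollary~\ref{crl1}), hence so is $\{\varphi^\epsilon+\delta_C\}$, and Proposition~\ref{p_cond} gives that it epi-converges to $\sup_{\epsilon>0}\mathrm{cl}(\varphi^\epsilon+\delta_C)$. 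Because $\varphi^\epsilon+\delta_C$ is already lsc, this equals $\sup_{\epsilon>0}(\varphi^\epsilon+\delta_C)=(\sup_{\epsilon>0}\varphi^\epsilon)+\delta_C=(\lim_{\epsilon\to0}\varphi^\epsilon)+\delta_C=\varphi+\delta_C$, where the last equality is the pointwise limit identity established in the proof of Theorem~\ref{t_freq} via Corollary~\ref{crl1}.

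Finally I would invoke Proposition~\ref{p_conv} with $f^k=\varphi^{\epsilon_k}+\delta_C$ for an arbitrary sequence $\epsilon_k\downarrow0$ and $f=\varphi+\delta_C$: all hypotheses (bounded effective domains, proper, lsc, epi-convergence) have been checked, so $\lim_k\inf_x(\varphi^{\epsilon_k}+\delta_C)(x)=\inf_x(\varphi+\delta_C)(x)$, i.e.\ $\lim_{\epsilon\to0}\varphi^\epsilon(x^\epsilon)=\varphi(x^*)$ where $x^\epsilon$ solves \eqref{p_freq2} and $x^*$ solves \eqref{p_freq0}, and any accumulation point of $\{x^\epsilon\}$ lies in $\mathrm{arg\,min}(\varphi+\delta_C)$, hence is optimal for \eqref{p_freq0}; existence of the $x^\epsilon$ and of $x^*$ follows from continuity/lower semicontinuity on the compact set $C$.

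I expect the only genuinely non-routine point to be the incorporation of the \emph{equality} constraint: one cannot simply quote an ``adding a continuous function preserves epi-convergence'' rule, since $\delta_C$ is not continuous, so the argument must go back through the monotonicity (Proposition~\ref{p_cond}) together with the observation that $\varphi+\delta_C$ is lsc, which makes the closure operation transparent. Everything else — compactness of $C$, properness via a strictly positive feasible point, and the pointwise limit $\varphi^\epsilon\to\varphi$ — is either immediate or already proved earlier in the paper.
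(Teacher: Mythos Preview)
Your proof is correct and follows the same route as the paper's one-line argument (compactness of the feasible set, Theorem~\ref{t_freq}, and Proposition~\ref{p_conv}); the paper simply asserts that these three ingredients suffice. You are in fact more careful than the paper: you explicitly verify, via the monotonicity of $\{\varphi^\epsilon\}$ and Proposition~\ref{p_cond}, that epi-convergence survives adding the indicator $\delta_C$ of the equality-constraint set, a step the paper leaves implicit.
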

\begin{proof}
It immediately follows from the compactness of the feasible set of \eqref{p_freq0}, Proposition \ref{p_conv}, and Theorem \ref{t_freq}.
\end{proof}

In the case of the truss structure ($K$ and $M$ are affine), the problem \eqref{p_freq2} is a pseudoconvex optimization problem, and thus we can compute a global optimal solution.

\subsection{Simple examples}\label{s_exm}

We consider two simple examples of eigenfrequency optimization to show that although $\varphi$ can be unbounded and discontinuous, $\varphi^\epsilon$ is continuous and suitably approximates $\varphi$.

\subsubsection{Without non-structural mass}
\begin{figure}[ht]
    \centering
    \includegraphics[width=3cm]{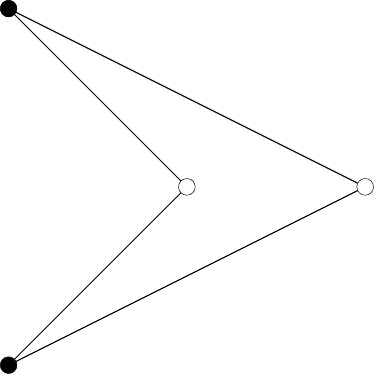}
    \caption{Initial design. The black nodes are fixed. The white nodes are free to move only in the horizontal direction.}
    \label{f_wo_ns}
\end{figure}

\begin{figure}[ht]
  \centering
  \begin{tabular}{cc}
  \begin{minipage}[t]{0.45\hsize}
    \centering
    \includegraphics[width=4.7cm]{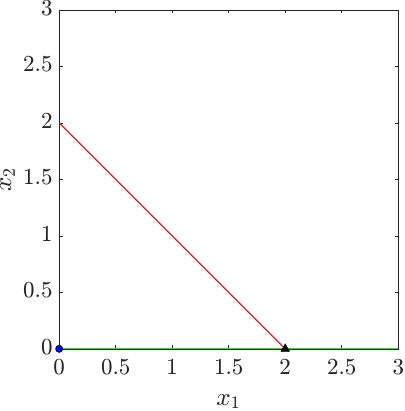}
    \subcaption{The graph of $\varphi$. $\varphi(x)=0$ at the blue dot, $\varphi(x)=1$ on the green line, and $\varphi(x)=2$ elsewhere. The red line is the volume constraint $x_1+x_2=2$. The black triangle is the optimal solution.}
  \end{minipage} &
  \hspace{-2mm}
  \begin{minipage}[t]{0.45\hsize}
    \centering
    \includegraphics[width=5.5cm]{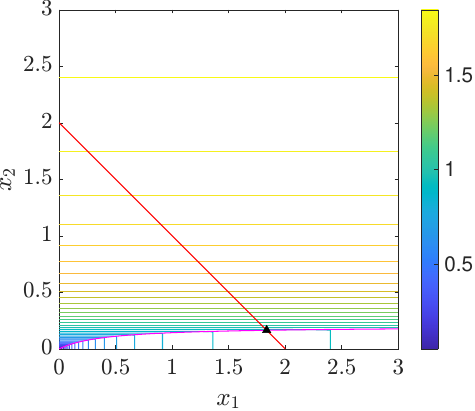}        
    \subcaption{The contour line of $\varphi^\epsilon$ with $\epsilon=0.2$. $\varphi^\epsilon$ is nondifferentiable but continuous on the magenta curve. The black triangle is the optimal solution, which converges to $(x_1,x_2)=(2,0)$ as $\epsilon\to0$.}
  \end{minipage}
  \end{tabular}
  \caption{Graphs of $\varphi(x)$ in \eqref{varphi1} and $\varphi^\epsilon(x)$ in \eqref{varphieps1} (without non-structural mass).}
  \label{f_ex1}
\end{figure}

We consider a simplified version of Example 2.4 in \cite{achtziger07siam}. The initial design is shown in Figure \ref{f_wo_ns}. We can actually construct the following example by scaling lengths, densities, and Young's moduli and restricting the displacements of nodes to one direction in Example 2.4 in \cite{achtziger07siam}. We define
\e{
K(x)=
\begin{pmatrix}
x_1 & 0 \\
0 & x_2 \\
\end{pmatrix},\ \ 
M(x)=
\begin{pmatrix}
x_1 & 0 \\
0 & 2x_2 \\
\end{pmatrix}.
}
Then, we have
\e{
\varphi(x)=
\begin{cases}
2 & (x_1\ge0,\ x_2>0)\\
1 & (x_1>0,\ x_2=0)\\
0 & (x_1=x_2=0)
\end{cases}
\label{varphi1}
}
and
\al{
\varphi^\epsilon(x)
& = \max\cur{\frac{x_1}{x_1+\epsilon},\frac{2x_2}{x_2+\epsilon}}\\
& = 
\begin{cases}
\frac{x_1}{x_1+\epsilon} & \prn{x_2\le\frac{\epsilon x_1}{x_1+2\epsilon}}\\
\frac{2x_2}{x_2+\epsilon} & \prn{x_2>\frac{\epsilon x_1}{x_1+2\epsilon}}
\end{cases}.
\label{varphieps1}
}
As shown in Figure \ref{f_ex1}, $\varphi$ is discontinuous. With the volume constraint $x_1+x_2=2$, the optimal solution of the original problem \eqref{p_freq0} is $(x_1,x_2)=(2,0)$. The optimal solution of the approximated problem \eqref{p_freq1} is $(x_1,x_2)=(1-\epsilon+\sqrt{\epsilon^2-6\epsilon+1},1+\epsilon-\sqrt{\epsilon^2-6\epsilon+1})$, the intersection of $x_1+x_2=2$ and $x_2=\epsilon x_1/(2\epsilon+x_1)$, which converges to $(x_1,x_2)=(2,0)$ when $\epsilon\to0$. In contrast, the approximated problem with the artificial lower bound \eqref{p_freq1} has the optimal solution set $\{(x_1,x_2)\in[\epsilon,\infty)^2\mid x_1+x_2=2\}$, which does not converge to the optimal solution of the original problem $(x_1,x_2)=(2,0)$.

\subsubsection{With non-structural mass}

\begin{figure}[ht]
    \centering
    \includegraphics[width=3cm]{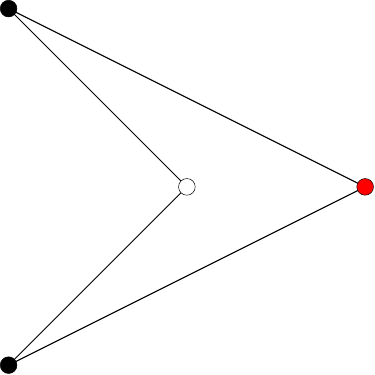}
    \caption{Initial design. The black nodes are fixed. The non-structural mass is applied to the red node. The white and red nodes are free to move only in the horizontal direction.}
    \label{f_w_ns}
\end{figure}

\begin{figure}[ht]
  \centering
  \begin{tabular}{cc}
  \begin{minipage}[t]{0.45\hsize}
    \centering
    \includegraphics[width=5.5cm]{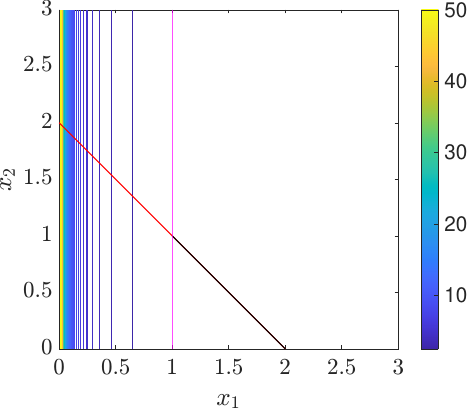}
    \subcaption{The contour line of $\varphi$. $\varphi$ is nondifferentiable but continuous on the magenta line. The red line is the volume constraint $x_1+x_2=2$. The black line is the optimal solution set $\{(x_1,x_2)\in[0,\infty)^2\mid x_1+x_2=2,\ x_1\ge 1\}$.}
  \end{minipage} &
  \hspace{-2mm}
  \begin{minipage}[t]{0.45\hsize}
    \centering
    \includegraphics[width=5.5cm]{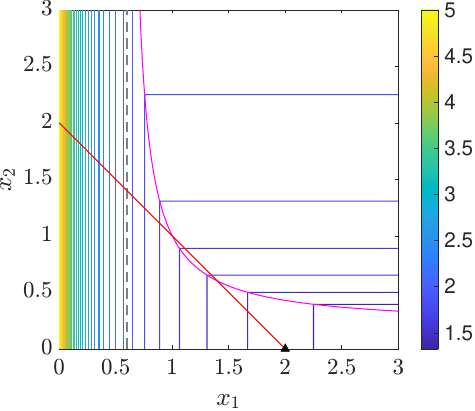}
    \subcaption{The contour line of $\varphi^\epsilon$ with $\epsilon=0.2$. $\varphi^\epsilon$ is nondifferentiable on the magenta curve (the black dashed line is the asymptotic line). The black triangle is the optimal solution $(x_1,x_2)=(2,0)$.}
  \end{minipage}
  \end{tabular}
  \caption{Graphs of $\varphi(x)$ in \eqref{varphi2} and $\varphi^\epsilon(x)$ in \eqref{varphieps2} (with non-structural mass).}
  \label{f_ex2}
\end{figure}

The initial design with a non-structural mass is shown in Figure \ref{f_w_ns}. We have
\e{
K(x)=
\begin{pmatrix}
x_1 & 0 \\
0 & x_2 \\
\end{pmatrix},\ \ 
M(x)=
\begin{pmatrix}
x_1+1 & 0 \\
0 & 2x_2 \\
\end{pmatrix}.
}
Then, we have
\e{
\varphi(x)=
\begin{cases}
1+\frac{1}{x_1} & (0<x_1<1)\\
2 & (x_1\ge1)\\
\infty & (x_1=0)
\end{cases}
\label{varphi2}
}
and
\al{
\varphi^\epsilon(x)
& = \max\cur{\frac{x_1+1}{x_1+\epsilon},\frac{2x_2}{x_2+\epsilon}}\\
& = 
\begin{cases}
\frac{x_1+1}{x_1+\epsilon} & \prn{x_2\le\frac{\epsilon (x_1+1)}{x_1+2\epsilon-1}}\\
\frac{2x_2}{x_2+\epsilon} & \prn{x_2>\frac{\epsilon (x_1+1)}{x_1+2\epsilon-1}}
\end{cases}.
\label{varphieps2}
}
As shown in Figure \ref{f_ex2}, $\varphi$ is unbounded. With the volume constraint $x_1+x_2=2$, the optimal solution set of the original problem \eqref{p_freq0} is $\{(x_1,x_2)\in[0,\infty)^2\mid x_1+x_2=2,\ x_1\ge 1\}$ and the optimal solution of the approximated problem \eqref{p_freq1} is $(x_1,x_2)=(2,0)$, which is in the optimal solution set of the original problem. In this case, the approximated problem with the artificial lower bound \eqref{p_freq1} also works, since the optimal solution set of it is $\{(x_1,x_2)\in[\epsilon,\infty)^2\mid x_1+x_2=2,\ x_1\ge 1\}$, which converges to the optimal solution set of the original problem.

\section{Numerical results}

We compare the proposed approach with several existing approaches on the robust compliance optimization problem and the eigenfrequency optimization problem of truss structures. Note that we do not compare the computational cost since it depends on the problem size and the nonconvexity. The semidefinite programming approach with the interior-point method is usually very efficient in small- to medium-scale convex problems. However, it becomes less efficient for large-scale nonconvex problems. All the experiments in this paper have been conducted on MacBook Pro (2019, 1.4 GHz Quad-Core Intel Core i5, 8 GB memory) and MATLAB R2022b. We used SDPT3 for an SDP solver.

\subsection{Robust compliance opimization}

The initial design (ground structure) is shown in Figure \ref{f_ground_robust}. All nodes are connected by bars and overlapping bars are eliminated. The initial design has uniform cross-sectional areas satisfying the volume constraint.

We set the parameters of the problem as follows; the number of the optimization variables is $m=105$, the size of the matrices is $n=30$, Young's modulus of the material used in the stiffness matrix is $1$ Pa, the distance between the nearest nodes of the truss structure is $1$ m, and the upper limit of the volume is $V_0=0.1$ m$^3$. For details of algorithms, the smoothing accelerated projected gradient method (S-APG) for the robust compliance optimization, see \cite{nishioka23coap,nishioka23orl}.

Figure \ref{f_robust} shows solutions of different formulations of the robust compliance optimization problems \eqref{p_robust_sdp}, \eqref{p_robust1} with $\epsilon=10^{-8}$, and \eqref{p_robust2} with $\epsilon=10^{-4},10^{-9}$. The width of bars in Figure \ref{f_robust} are proportional to the cross-sectional areas of bars. Note that, in practice, we may need to scale the value of $\epsilon$ depending on the value of Young's modulus, etc. As Theorems \ref{t_robust1} and \ref{t_robust2} suggest, solutions of approximated problems \eqref{p_robust1} and \eqref{p_robust2} are close to a solution of the SDP formulation of the original problem \eqref{p_robust_sdp} (A solution of \eqref{p_robust2} becomes closer to a solution of \eqref{p_robust_sdp} with a smaller value of $\epsilon$). One advantage of approximated problems \eqref{p_robust1} and \eqref{p_robust2} is that they can be solved by efficient first-order methods even if they are large-scale, compared to the interior-point method for the original problem \eqref{p_robust_sdp}. Moreover, a projection-based algorithm for the proposed approximated problem \eqref{p_robust2} provides a solution with many bars whose cross-sectional areas are 0, compared to the others. Thick bars are order of $10^{-3}$ and thin bars are order of $10^{-7}$ to $10^{-12}$. Thin bars are possibly due to numerical errors in optimization algorithms. In particular, many non-zero bars of a solution of \eqref{p_robust_sdp} are due to the property of the interior-point method. We may obtain solutions of \eqref{p_robust1} with fewer bars by second-order nonsmooth optimization algorithms. Note that there may be multiple global optimal solutions.

\begin{rmk}
If we set $\epsilon$ too small, numerical instability in the computation of $\psi^\epsilon$ occurs because the matrices are too close to singular. If we set Young's modulus very large, we need to change $\epsilon$ proportional to Young's modulus to avoid the numerical instability.
\end{rmk}

\begin{figure}[ht]
    \centering
    \includegraphics[width=4cm]{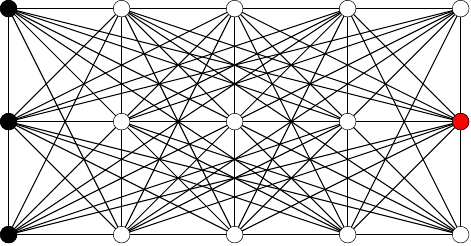}
    \caption{Initial design (ground structure). The black nodes are fixed. The white and red nodes are free to move. The uncertain load is applied on the red node.}
    \label{f_ground_robust}
\end{figure}

\begin{figure}[ht]
  \centering
  \begin{tabular}{cc}
  \begin{minipage}[t]{0.45\hsize}
    \centering
    \includegraphics[width=4cm]{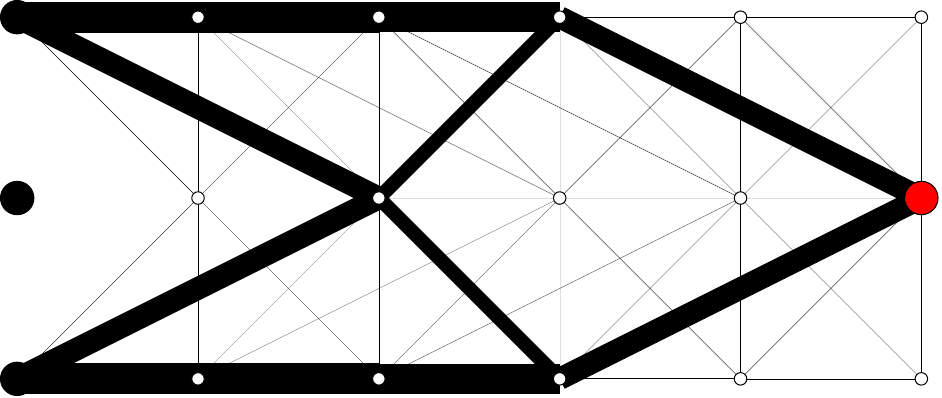}
    \subcaption{A solution of \eqref{p_robust1} with $\epsilon=10^{-9}$ by S-APG ($\psi^\epsilon(x)=220.5246$).}
  \end{minipage} &
  \hspace{-2mm}
  \begin{minipage}[t]{0.45\hsize}
    \centering
    \includegraphics[width=4cm]{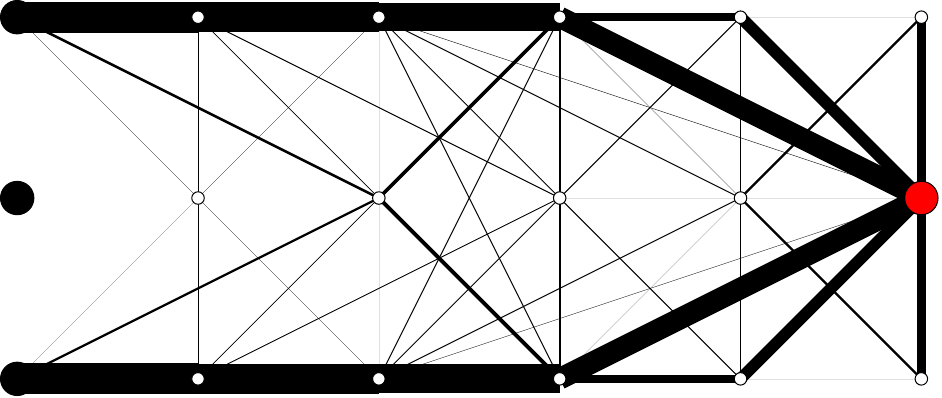}
    \subcaption{A solution of \eqref{p_robust1} with $\epsilon=10^{-4}$ by S-APG ($\psi^\epsilon(x)=65.9391$).}
  \end{minipage}\\
  \begin{minipage}[t]{0.45\hsize}
    \centering
    \includegraphics[width=4cm]{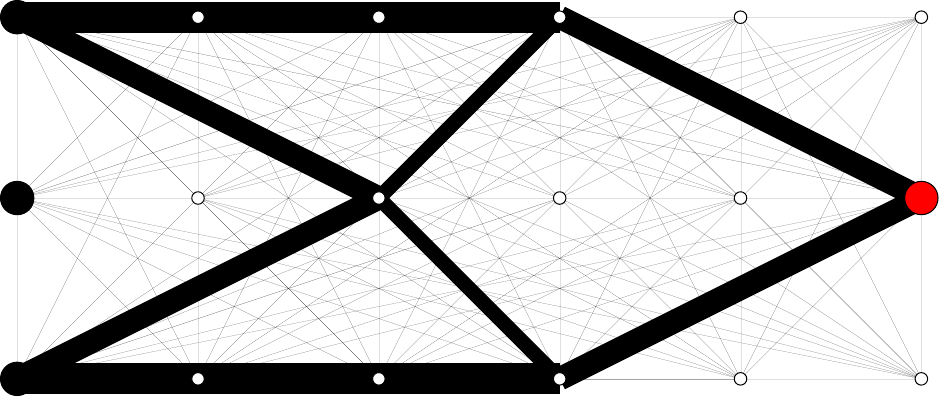}
    \subcaption{A solution of \eqref{p_robust2} by S-APG ($\psi(x)=220.5278$).}
  \end{minipage} &
  \hspace{-2mm}
  \begin{minipage}[t]{0.45\hsize}
    \centering
    \includegraphics[width=4cm]{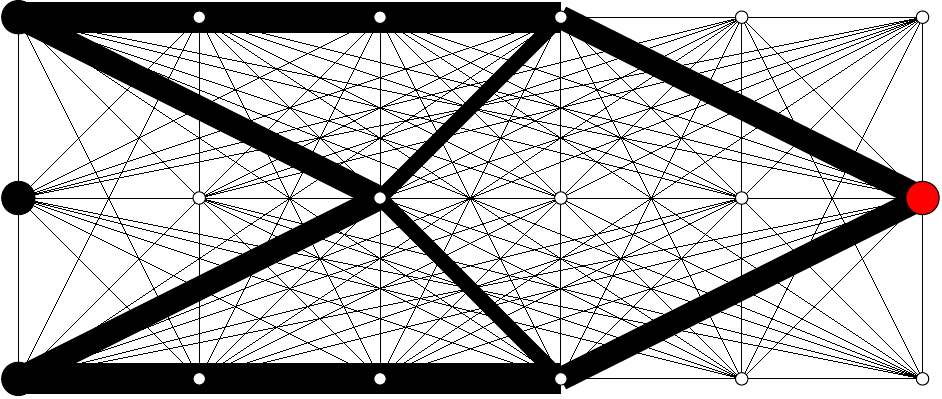}
    \subcaption{A solution of \eqref{p_robust_sdp} by SDPT3 ($\psi(x)=220.5000$).}
  \end{minipage}
  \end{tabular}
  \caption{Solutions of problems \eqref{p_robust1}, \eqref{p_robust2}, and \eqref{p_robust_sdp}. Bars with cross-sectional areas exactly equal to $0$ are not displayed. The black nodes are fixed and the circular uncertain load is applied to the red node.}
  \label{f_robust}
\end{figure}

\subsection{Eigenfrequency optimization}

The initial design (ground structure) is shown in Figure \ref{f_ground_freq}.

We set the parameters of the problem as follows; the number of the optimization variables is $m=200$, the size of the matrices is $n=46$, Young's modulus of the material used in the stiffness matrix is $1$ Pa, the density of the material used in the mass matrix is $7.86\times10^{-8}$ kg/m$^3$, the mass of the non-structural mass is $5\times10^{-5}$ kg, the distance between the nearest nodes is $1$ m, and the upper limit of the volume is $V_0=0.1$ m$^3$. For details of algorithms, the smoothing accelerated projected gradient method (S-APG) and the bisection method, see \cite{nishioka23coap}. Although S-APG does not have the convergence guarantee in the eigenfrequency optimization, it converged the same solution as the subgradient method \cite{kiwiel01}, which has the convergence guarantee to a global optimal solution.

Figure \ref{f_freq} shows solutions of different formulations of the eigenfrequency optimization problems \eqref{p_freq0}, \eqref{p_freq1} with $\epsilon=10^{-8}$, and \eqref{p_freq2} with $\epsilon=10^{-4},10^{-8}$. As Theorems \ref{t_freq} suggests, a solution of the approximated problem \eqref{p_freq2} is close to a solution of the original problem \eqref{p_freq0} (solutions are less sensitive to the value of $\epsilon$ compared to the robust compliance optimization). A solution of \eqref{p_freq1} is also close to a solution of the original problem \eqref{p_freq0}, even though there are no theoretical guarantees.

\begin{figure}[ht]
    \centering
    \includegraphics[width=3cm]{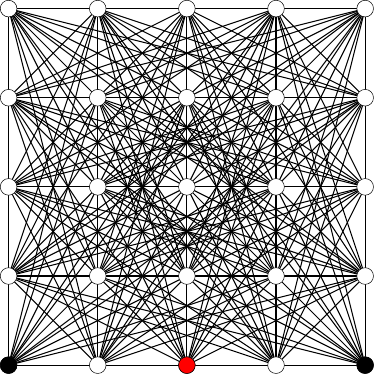}
    \caption{Initial design (ground structure). The black nodes are fixed. The white and red nodes are free to move. The non-structural mass is applied to the red node.}
    \label{f_ground_freq}
\end{figure}

\begin{figure}[ht]
  \centering
  \begin{tabular}{cc}
  \begin{minipage}[t]{0.45\hsize}
    \centering
    \includegraphics[width=3cm]{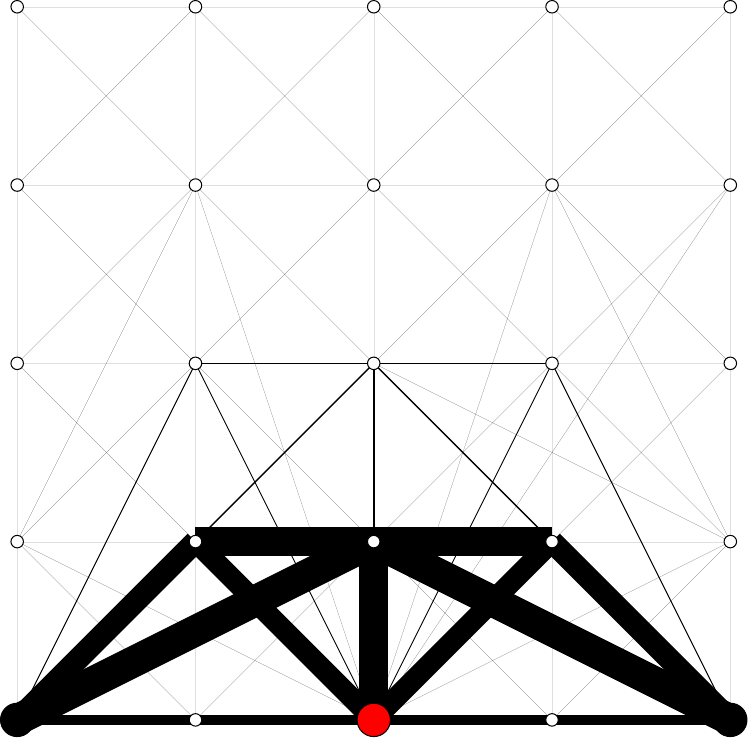}
    \subcaption{A solution of \eqref{p_freq2} with $\epsilon=10^{-8}$ by S-APG ($\psi^\epsilon(x)=0.019455$).}
  \end{minipage} &
  \hspace{-2mm}
  \begin{minipage}[t]{0.45\hsize}
    \centering
    \includegraphics[width=3cm]{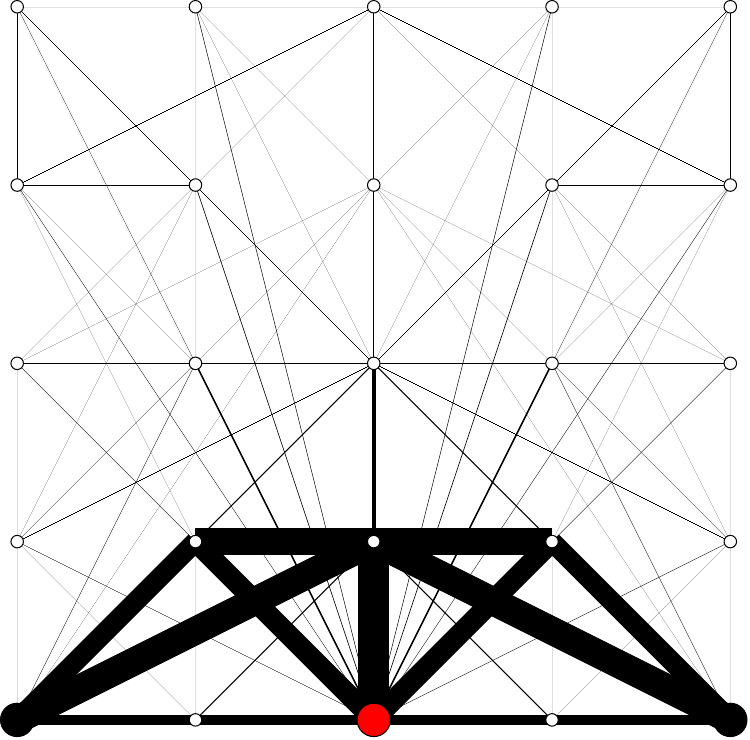}
    \subcaption{A solution of \eqref{p_freq2} with $\epsilon=10^{-4}$ by S-APG ($\psi^\epsilon(x)=0.01681$).}
  \end{minipage}\\
  \begin{minipage}[t]{0.45\hsize}
    \centering
    \includegraphics[width=3cm]{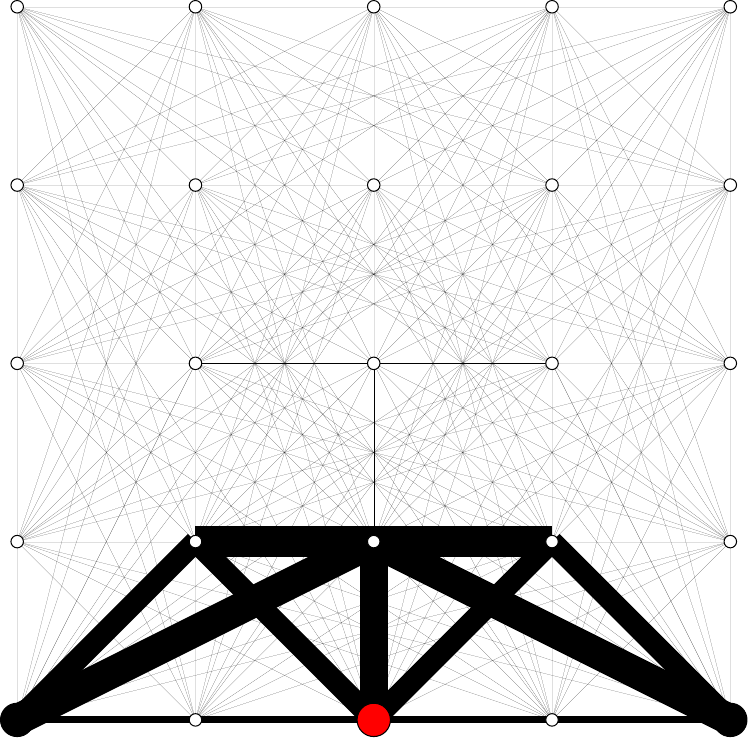}
    \subcaption{A solution of \eqref{p_freq1} by S-APG  ($\psi(x)=0.019455$).}
  \end{minipage} &
  \hspace{-2mm}
  \begin{minipage}[t]{0.45\hsize}
    \centering
    \includegraphics[width=3cm]{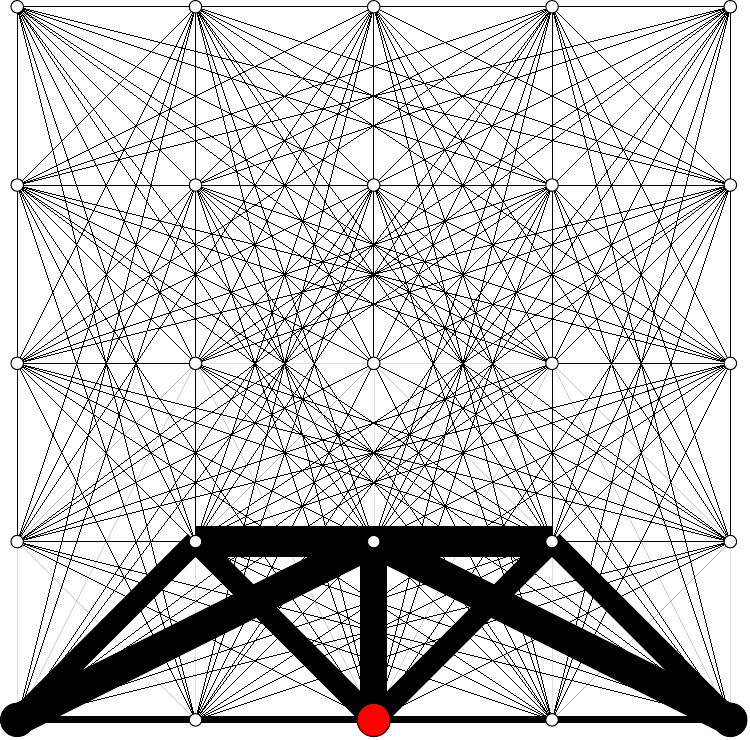}
    \subcaption{A solution of \eqref{p_freq0} by the bisection method with SDPT-3 ($\psi(x)=0.019453$).}
  \end{minipage}
  \end{tabular}
  \caption{Solutions of problems \eqref{p_freq2}, \eqref{p_freq0}, and \eqref{p_freq1}. Bars with cross-sectional areas exactly equal to $0$ are not displayed. The black nodes are fixed and the non-structural mass is applied to the red node.}
  \label{f_freq}
\end{figure}

\section{Conclusion}

We proposed an extended formulation and a new approximation of the maximum (and minimum) generalized eigenvalue function and showed the epi-convergence of the approximation to the proposed extended formulation. We consider two specific examples of generalized eigenvalue optimization problems in topology optimization: the robust compliance optimization and the eigenfrequency optimization. Theoretical and numerical results show that the proposed approximation is a good alternative to the traditional approximation with the artificial lower bound of the design variable.

Although we only conduct numerical experiments on truss topology optimization, theoretical results still hold for continuum topology optimization. Note that continuum topology optimization problems are generally nonconvex and may have multiple local minima and our results can only be applied to the global minimum.

The application of variational analysis to topology optimization is not very common. This paper suggests that tools in variational analysis, especially epi-convergence, can be useful for constructing effective approximations for many complex problems with nonsmoothness, unboundedness, and discontinuity in topology optimization.


\backmatter

\bmhead{Acknowledgments}

This research is part of the results of Value Exchange Engineering, a joint research project between R4D, Mercari, Inc. and RIISE. The work of the first author is partially supported by JSPS KAKENHI JP23KJ0383. The work of the second author is partially supported by JSPS KAKENHI JP21K04351 and JP24K07747.

\bmhead{Data availability statement}

The MATLAB codes used in the numerical experiments are available from the corresponding author upon request.

\bmhead{Conflict of interest}

The authors declare that they have no conflict of interest.

\begin{appendices}

\section{Minimum generalized eigenvalue}\label{A1}

We extend the results to the minimum generalized eigenvalue, which has a simple relationship with the maximum generalized eigenvalue.

\begin{dfn}
We define the minimum generalized eigenvalue function $\lambda_\mathrm{min}(\cdot,\cdot):\sn_{\succeq0}\times\sn_{\succeq0}\to(-\infty,\infty]$ by
\e{
\lambda_\mathrm{min}(X,Y)\coloneq
\begin{cases}
\underset{v\notin\kernel Y}{\inf}\frac{v^\top Xv}{v^\top Yv} & (Y\neq0),\\
\infty & (Y=0).
\end{cases}
\label{dfn1a}
}
\end{dfn}

\begin{thm}
The minimum generalized eigenvalue function $\lambda_\mathrm{min}(\cdot,\cdot):\sn_{\succeq0}\times\sn_{\succeq0}\to(-\infty,\infty]$ defined by \eqref{dfn1a} has the following equivalent definition:
\e{
\lambda_\mathrm{min}(X,Y)=\sup\{\alpha\ge0\mid X-\alpha Y\succeq0\}.
\label{dfn2a}
}
Note that if there does not exist $\alpha\ge0$ satisfying $X-\alpha Y\succeq0$, we define $\sup\{\alpha\ge0\mid X-\alpha Y\succeq0\}=0$.
Consequently, $\lambda_\mathrm{min}(\cdot,\cdot)$ is nonnegative, upper semi-continuous, and quasiconcave on $\sn_{\succeq0}\times\sn_{\succeq0}$.
\label{t_dfna}
\end{thm}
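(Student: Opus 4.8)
The plan is to mirror the proof of Theorem~\ref{t_dfn} and then transfer the conclusions via the reciprocal relationship between $\lambda_\mathrm{max}$ and $\lambda_\mathrm{min}$. First I would establish the equivalence of \eqref{dfn1a} and \eqref{dfn2a}. The case $Y=0$ is immediate: both sides give $\infty$ by the stated conventions (the set $\{\alpha\ge0\mid X-\alpha Y\succeq0\}$ is all of $[0,\infty)$ when $Y=0$, but we must be careful — actually when $Y=0$ the supremum is $+\infty$, consistent with \eqref{dfn1a}; note the convention ``$=0$ if no $\alpha$ works'' only bites when $Y\neq0$). For $Y\neq0$, I would argue the two inequalities separately, exactly as in Theorem~\ref{t_dfn} but with the roles reversed. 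For ``$\le$'': if $\alpha\ge0$ satisfies $X-\alpha Y\succeq0$, then $v^\top X v\ge\alpha v^\top Y v$ for all $v$, hence $\alpha\le (v^\top Xv)/(v^\top Yv)$ for all $v\notin\kernel Y$, so $\alpha\le\lambda_\mathrm{min}(X,Y)$; taking the sup over such $\alpha$ gives $\sup\{\alpha\ge0\mid X-\alpha Y\succeq0\}\le\lambda_\mathrm{min}(X,Y)$. For ``$\ge$'': let $\beta^*=\inf_{v\notin\kernel Y}(v^\top Xv)/(v^\top Yv)\ge0$. Since $\kernel Y\subseteq\kernel X$ need not hold here, I must check that $v^\top(X-\beta^* Y)v\ge0$ for \emph{all} $v$: for $v\notin\kernel Y$ this is the definition of $\beta^*$, and for $v\in\kernel Y$ we have $v^\top(X-\beta^*Y)v=v^\top Xv\ge0$ automatically since $X\succeq0$ (this is where semidefiniteness of $X$ is used, replacing the contradiction argument in Theorem~\ref{t_dfn}). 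Hence $X-\beta^*Y\succeq0$ with $\beta^*\ge0$, giving $\sup\{\alpha\ge0\mid X-\alpha Y\succeq0\}\ge\beta^*=\lambda_\mathrm{min}(X,Y)$.

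Once \eqref{dfn2a} is established, the structural properties follow from the superlevel-set characterization: for any $\alpha\ge0$,
\e{
\{(X,Y)\in\sn_{\succeq0}\times\sn_{\succeq0}\mid\lambda_\mathrm{min}(X,Y)\ge\alpha\}=\{(X,Y)\in\sn_{\succeq0}\times\sn_{\succeq0}\mid X-\alpha Y\succeq0\},
}
which is closed (closedness of the PSD cone, continuity of $(X,Y)\mapsto X-\alpha Y$) and convex (it is the preimage of the convex PSD cone under a linear map, intersected with a convex set). I should double-check the direction of this set identity against the sup-convention: $\lambda_\mathrm{min}(X,Y)\ge\alpha$ with $\alpha\ge0$ holds iff $\sup\{\beta\ge0\mid X-\beta Y\succeq0\}\ge\alpha$; since the set $\{\beta\ge0\mid X-\beta Y\succeq0\}$ is closed (and an interval containing $0$ when nonempty), this sup being $\ge\alpha$ is equivalent to $X-\alpha Y\succeq0$ — and for $\alpha=0$ both sides are automatic. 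Closed superlevel sets give upper semi-continuity and convex superlevel sets give quasiconcavity (by the standard epigraph/hypograph characterization, cf.\ \cite[Theorem~1.6]{rockafellar98}); nonnegativity is immediate from $0$ always lying in the feasible set of \eqref{dfn2a}.

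The cleanest alternative for the whole theorem — and the one I would actually present — is to invoke the reciprocal identity $\lambda_\mathrm{min}(X,Y)=1/\lambda_\mathrm{max}(Y,X)$ (with the conventions $1/0=\infty$, $1/\infty=0$), which the excerpt promises holds for the extended definitions and defers to this appendix; combined with Theorem~\ref{t_dfn} applied to $(Y,X)$, one reads off \eqref{dfn2a} from \eqref{dfn2} (since $\alpha Y-X\succeq0 \iff X-(1/\alpha)Y\preceq0$... wait, that's the wrong sign) — so this shortcut requires care and is probably \emph{not} cleaner. I would therefore keep the direct argument above and state the reciprocal relation as a separate remark proved independently.

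The main obstacle is bookkeeping around the two edge-case conventions ($\sup\emptyset$-type situations and the $Y=0$ branch), making sure the superlevel-set identity and the sup/inf equivalence remain exactly correct at $\alpha=0$ and when the feasible sets degenerate; the analytic content is otherwise a routine transcription of Theorem~\ref{t_dfn}, with the one genuine simplification that $X\succeq0$ lets the $v\in\kernel Y$ case go through without the perturbation-and-contradiction argument needed in the $\lambda_\mathrm{max}$ proof. I do not expect to need any continuity-of-the-Rayleigh-quotient argument here, since all properties claimed are one-sided (u.s.c., quasiconcave) and flow purely from the linear-matrix-inequality description.
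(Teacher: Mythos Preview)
Your proposal is correct and follows essentially the same route as the paper's proof: a case split on $Y=0$ versus $Y\neq0$, the two-inequality argument for the equivalence of \eqref{dfn1a} and \eqref{dfn2a}, and the closed/convex superlevel-set observation for upper semi-continuity and quasiconcavity. You also correctly identify the one genuine simplification relative to Theorem~\ref{t_dfn}, namely that $X\succeq0$ handles the $v\in\kernel Y$ case directly without the perturbation-and-contradiction step; the paper's proof uses exactly this shortcut.
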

\begin{proof}
It is almost identical to the proof of Theorem \ref{t_dfn}. 

The equivalence is obvious when $Y=0$. We consider the rest cases. First, we show
\e{
\sup\{\alpha\ge0\mid X-\alpha Y\succeq0\}\le\underset{v\notin\kernel Y}{\inf}\frac{v^\top Xv}{v^\top Yv}.
\label{ineq1a}
}
Let $\alpha^*=\sup\{\alpha\ge0\mid X-\alpha Y\succeq0\}$. Note that when $Y\neq0$, $\alpha^*<\infty$. We have $X-\alpha^* Y\succeq0$ by the closedness of the positive semidefinite cone and
\al{
X-\alpha^* Y\succeq0
& \iff v^\top Xv\ge\alpha^* v^\top Yv\ \ (\forall v\in\rn)\\
& \implies \alpha^* \le \frac{v^\top Xv}{v^\top Yv}\ \ (\forall v\notin\kernel Y),
}
which implies \eqref{ineq1a}.

Next, we show
\e{
\sup\{\alpha\ge0\mid X-\alpha Y\succeq0\}\ge\underset{v\notin\kernel Y}{\inf}\frac{v^\top Xv}{v^\top Yv}.
\label{ineq2a}
}
Let 
\e{
\beta^*=\underset{v\notin\kernel Y}{\inf}\frac{v^\top Xv}{v^\top Yv}.
\label{eq1a}
}
Note that when $Y\neq0$, $\beta^*<\infty$. Then, \eqref{eq1a} implies $v^\top(X-\beta^*Y)v\ge0$ holds for any $v\notin\kernel Y$. Also, for any $v\in\kernel Y$, $v^\top(X-\beta^*Y)v=v^\top Xv\ge0$ holds. Thus, $X-\beta^*Y\succeq0$ holds, which implies \eqref{ineq2a}. The above discussions imply that the equality holds for \eqref{ineq1a}, which shows the equivalence of \eqref{dfn1a} and \eqref{dfn2a}.

The nonnegativity, the upper semi-continuity, and the quasiconcavity are obvious by the closedness and convexity of the superlevel set.
\end{proof}

The above results are consistent with the definition \eqref{ac} introduced by \cite{achtziger07siam} and extended since we consider the matrix variables and the case with $Y=0$, which results in an extended real-valued function. Note that when $Y\neq0$, $\lambda_\mathrm{min}(X,Y)$ is finite.

\begin{prp}
For any $(X,Y)\in\sn_{\succeq0}\times\sn_{\succeq0}$, we have
\e{
\lambda_\mathrm{max}(X,Y)=\frac{1}{\lambda_\mathrm{min}(Y,X)},
}
with the conventions $1/0=\infty$ and $1/\infty=0$.
\end{prp}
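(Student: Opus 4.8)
The plan is to deduce the identity from the semidefinite descriptions \eqref{dfn2} and \eqref{dfn2a}, together with the elementary observation that a strictly positive scalar can be divided through a positive semidefinite inequality. First I would restate the two threshold characterizations implicit in Theorems \ref{t_dfn} and \ref{t_dfna}: for every $\gamma\ge0$, $\lambda_\mathrm{max}(X,Y)\le\gamma$ if and only if $\gamma Y-X\succeq0$; and for every $\delta\ge0$, $\lambda_\mathrm{min}(Y,X)\ge\delta$ if and only if $Y-\delta X\succeq0$. These hold because the sets $\{\alpha\ge0\mid\alpha Y-X\succeq0\}$ and $\{\delta\ge0\mid Y-\delta X\succeq0\}$ are closed (closedness of the positive semidefinite cone) and monotone in the scalar; hence the first is either empty or a half-line $[\lambda_\mathrm{max}(X,Y),\infty)$, and the second is a nonempty interval $[0,\lambda_\mathrm{min}(Y,X)]$ (it contains $0$ since $Y\succeq0$), possibly $[0,\infty)$.

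Next I would combine the two characterizations: for $\gamma>0$, dividing $\gamma Y-X\succeq0$ by $\gamma$ yields $Y-(1/\gamma)X\succeq0$ and conversely, so $\lambda_\mathrm{max}(X,Y)\le\gamma$ holds if and only if $\lambda_\mathrm{min}(Y,X)\ge1/\gamma$, for every $\gamma>0$. The identity then follows by a short case analysis on $c\coloneq\lambda_\mathrm{max}(X,Y)\in[0,\infty]$ (nonnegative by Theorem \ref{t_dfn}). If $c\in(0,\infty)$, taking $\gamma=c$ gives $\lambda_\mathrm{min}(Y,X)\ge1/c$, while taking $\gamma\uparrow c$ (so that $\lambda_\mathrm{max}(X,Y)\not\le\gamma$) gives $\lambda_\mathrm{min}(Y,X)<1/\gamma$ and hence $\lambda_\mathrm{min}(Y,X)\le1/c$ in the limit; thus $\lambda_\mathrm{min}(Y,X)=1/c$. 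If $c=\infty$, the equivalence forces $\lambda_\mathrm{min}(Y,X)<1/\gamma$ for every $\gamma>0$, so $\lambda_\mathrm{min}(Y,X)=0$, matching $1/0=\infty$. If $c=0$, then $\gamma Y-X\succeq0$ for all $\gamma>0$, and letting $\gamma\downarrow0$ with the cone closed gives $-X\succeq0$, i.e.\ $X=0$; then $\lambda_\mathrm{min}(Y,0)=\infty$ by \eqref{dfn1a}, matching $1/\infty=0$.

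The step I expect to be fussiest --- rather than genuinely hard --- is the endpoint bookkeeping: keeping straight the conventions $\inf\emptyset=\infty$ in \eqref{dfn2}, $1/0=\infty$, and $1/\infty=0$, and verifying the degenerate corners $X=0$ and $Y=0\ne X$ against them. These are all absorbed into the case split above, so no separate computation is required.
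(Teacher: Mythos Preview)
Your argument is correct and rests on the same core observation as the paper: for $\gamma>0$, the equivalence $\gamma Y-X\succeq0\iff Y-(1/\gamma)X\succeq0$ links the threshold characterizations \eqref{dfn2} and \eqref{dfn2a}. The paper packages this as ``the sublevel sets of $\lambda_\mathrm{max}(X,Y)$ and of $1/\lambda_\mathrm{min}(Y,X)$ coincide for every $\alpha$'', which yields pointwise equality without a separate case split on the value $c$; your pointwise case analysis on $c\in\{0\}\cup(0,\infty)\cup\{\infty\}$ is the same content unwound, and your handling of the endpoints is sound.
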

\begin{proof}
We show the claim by showing the sublevel sets of the functions on both sides of the equality coincide for any $\alpha\in\r$. We write $\{(X,Y)\mid\cdot\}$ to denote $\{(X,Y)\in\sn_{\ge0}\times\sn_{\ge0}\mid\cdot\}$ for simplicity. For $\alpha<0$, the sublevel sets are empty. For $\alpha=0$, we have
\e{
\{(X,Y)\mid\lambda_\mathrm{max}(X,Y)\le0\}=\{(X,Y)\mid-X\succeq0\}=\{(X,Y)\mid X=0\}
}
and
\e{
\cur{(X,Y)\middle|\frac{1}{\lambda_\mathrm{min}(Y,X)}\le0}=\{(X,Y)\mid\lambda_\mathrm{min}(Y,X)=\infty\}=\{(X,Y)\mid X=0\},
}
and thus they coincide. For $\alpha>0$, we have
\e{
\{(X,Y)\mid\lambda_\mathrm{max}(X,Y)\le\alpha\}=\{(X,Y)\mid\alpha Y-X\succeq0\}
}
and
\al{
\cur{(X,Y)\middle|\frac{1}{\lambda_\mathrm{min}(Y,X)}\le\alpha}
& = \cur{(X,Y)\middle|\lambda_\mathrm{min}(Y,X)\ge\frac{1}{\alpha}}\\
& = \cur{(X,Y)\middle| Y-\frac{1}{\alpha}X\succeq0}\\
& = \{(X,Y)\mid\alpha Y-X\succeq0\},
}
which completes the proof.
\end{proof}

We can also construct a hypo-convergent\footnote{A sequence of functions $f^k$ hypo-converges to $f$ iff $-f^k$ epi-converges to $-f$.} approximation of the minimum generalized eigenvalue by almost the same argument in Section 3.

\end{appendices}

\bibliography{ref}

\end{document}